\theoremstyle{definition}
\newtheorem{definition}{Definition}
\newtheorem{example}[definition]{Example}
\theoremstyle{plain}
\newtheorem{theorem}[definition]{Theorem}
\newtheorem{lemma}[definition]{Lemma}
\newtheorem{proposition}[definition]{Proposition}
\newtheorem{corollary}[definition]{Corollary}
\theoremstyle{theorem}
\newtheorem*{Merk}{Remark}
\newcommand{\prop}[1]{\begin{proposition}#1\end{proposition}}
\newcommand{\teo}[1]{\begin{theorem}#1\end{theorem}}
\newcommand{\lem}[1]{\begin{lemma}#1\end{lemma}}
\newcommand{\cor}[1]{\begin{corollary}#1\end{corollary}}
\newcommand{\df}[1]{\begin{definition}#1\end{definition}}
\newcommand{\idf}[1]{\textbf{#1}}
\newcommand{\Cech}{Čech}
\newcommand{\ie}{i.e.\ }
\newcommand{\eg}{e.g.\ }
\newcommand{\etal}{\textit{et al.\ }}
\DeclareMathOperator{\Link}{lk}
\DeclareMathOperator{\link}{lk}
\DeclareMathOperator{\depth}{depth}
\newcommand{\iso}{\cong}
\newcommand{\id}{\mathrm{id}}
\DeclareMathOperator{\bottleneck}{d_\mathrm{B}}
\DeclareMathOperator{\HC}{HC}
\DeclareMathOperator{\PH}{PH}
\DeclareMathOperator{\Conv}{CH}
\newcommand{\figref}[1]{Figure~\ref{fig:#1}}
\newcommand{\exref}[1]{Example~\ref{ex:#1}}
\newcommand{\secref}[1]{Section~\ref{sec:#1}}
\newcommand{\alphamax}{\ensuremath{\alpha_{\mathrm{max}}}}
\newcommand{\setsuchthat}{\ensuremath{\, \mid \,}}
\tikzset{twosimp/.style={fill opacity=0.6,fill=gray,draw opacity=1.0}}
\newif\ifoldtikz
\newcommand{\convexpath}[2]{
  \ifoldtikz
  \convexpathold{#1}{#2}
  \else
  \convexpathnew{#1}{#2}
  \fi
}
\newcommand{\convexpathold}[2]{
  [   
  create hullcoords/.code={
    \global\edef\namelist{#1}
    \foreach [count=\counter] \nodename in \namelist {
      \global\edef\numberofnodes{\counter}
      \coordinate (hullcoord\counter) at (\nodename);
    }
    \coordinate (hullcoord0) at (hullcoord\numberofnodes);
    \pgfmathtruncatemacro\lastnumber{\numberofnodes+1}
    \coordinate (hullcoord\lastnumber) at (hullcoord1);
  },
  create hullcoords
  ]
  ($(hullcoord1)!#2!-90:(hullcoord0)$)
  \foreach [
  evaluate=\currentnode as \previousnode using \currentnode-1,
  evaluate=\currentnode as \nextnode using \currentnode+1
  ] \currentnode in {1,...,\numberofnodes} {
    let \p1 = ($(hullcoord\currentnode) - (hullcoord\previousnode)$),
    \n1 = {atan2(\x1,\y1) + 90},
    \p2 = ($(hullcoord\nextnode) - (hullcoord\currentnode)$),
    \n2 = {atan2(\x2,\y2) + 90},
    \n{delta} = {Mod(\n2-\n1,360) - 360}
    in 
    {arc [start angle=\n1, delta angle=\n{delta}, radius=#2]}
    -- ($(hullcoord\nextnode)!#2!-90:(hullcoord\currentnode)$) 
  }
}
\newcommand{\convexpathnew}[2]{
  [   
  create hullcoords/.code={
    \global\edef\namelist{#1}
    \foreach [count=\counter] \nodename in \namelist {
      \global\edef\numberofnodes{\counter}
      \coordinate (hullcoord\counter) at (\nodename);
    }
    \coordinate (hullcoord0) at (hullcoord\numberofnodes);
    \pgfmathtruncatemacro\lastnumber{\numberofnodes+1}
    \coordinate (hullcoord\lastnumber) at (hullcoord1);
  },
  create hullcoords
  ]
  ($(hullcoord1)!#2!-90:(hullcoord0)$)
  \foreach [
  evaluate=\currentnode as \previousnode using \currentnode-1,
  evaluate=\currentnode as \nextnode using \currentnode+1
  ] \currentnode in {1,...,\numberofnodes} {
    let \p1 = ($(hullcoord\currentnode) - (hullcoord\previousnode)$),
    \n1 = {atan2(\y1,\x1) + 90},
    \p2 = ($(hullcoord\nextnode) - (hullcoord\currentnode)$),
    \n2 = {atan2(\y2,\x2) + 90},
    \n{delta} = {Mod(\n2-\n1,360) - 360}
    in 
    {arc [start angle=\n1, delta angle=\n{delta}, radius=#2]}
    -- ($(hullcoord\nextnode)!#2!-90:(hullcoord\currentnode)$) 
  }
}
\title{Approximating Persistent Homology in Euclidean Space Through Collapses}
\author{
  Magnus Botnan\\
  \texttt{botnan@math.ntnu.no}
  \and
  Gard Spreemann\\
  \texttt{spreeman@math.ntnu.no}
}
\begin{document}

\maketitle

\begin{abstract}
The \Cech{} complex is one of the most widely used tools in applied algebraic topology. 
Unfortunately, due to the inclusive nature of the \Cech{} filtration, the number of simplices grows
exponentially in the number of input points. A practical consequence is that computations may have to
terminate at smaller scales than what the application calls for. 

In this paper we propose two methods to approximate the \Cech{} persistence module. Both
are constructed on the level of spaces, \ie as sequences of simplicial complexes induced by nerves.
We also show how the bottleneck distance between such persistence modules can be understood
by how tightly they are sandwiched on the level of spaces. In turn, this implies the correctness of our approximation methods.

Finally, we implement our methods and apply them to some example point clouds in Euclidean space. 

\end{abstract}

\section{Introduction}
Topological data analysis in general, and persistent homology in
particular, have shown great promise as tools for analyzing real-world
data arising in the sciences. Examples of successful applications range from
image analysis~\cite{bildeklein, joseklein}, to cancer research~\cite{puppekreft}, 
virology~\cite{viruslol} and sensor networks~\cite{deSilva_Ghrist_AGT:2007}.

Central to persistent homology are standard constructions for
recovering the homology of an underlying topological space from a
finite sample set, chiefly the \Cech{} and Vietoris--Rips
complexes. Unfortunately, due to the inclusive nature of their
filtrations, the number of simplices grows exponentially in the number
of sample points. This may be unfortunate as simplices added at small
scales may contribute little to homology at larger, possibly more
interesting, scales.

An extreme example may be a constant region in a measurement signal
(perhaps from faulty equipment or downtime) under time-delay
embedding~\cite{perea2013sliding}. In such a case, a large proportion
of the point cloud may lie in, say, a dense lump of $N$ points that
contributes nothing to the cloud's overall homology, yet introduces
$\binom{N}{k+1}$ $k$-simplices in the complex from an early scale.

Preprocessing of the point cloud may sometimes rectify the situation,
but such schemes are often decidedly ``off-line'' in the sense that
they require a one-off decision about which sparsifications to
effectuate ahead of persistence computations. We propose more
``on-line'' methods wherein a decision to attempt a simplification of
the simplicial complex may be made at any time during computations
when it is deemed necessary. The simplification operation itself
requires only that the point cloud comes supplied with its complete
linkage hierarchical clustering, which may be computed ahead of time
once and for all, or the computation of nets.

\subsection{Contributions}
The well-known Nerve lemma~\cite{citeulike:1282830} allows one to capture the topology
of a continuous space using discrete structures. However, the lemma works under the
assumption of a good cover, \ie a cover wherein every finite intersection of covering sets is contractible. 
This means that whenever we have a parametrized sequence of good covers, connected by maps of 
covers, the persistence diagram captured by the nerves equals the persistence diagram computed
by singular homology on the level of spaces. 

A central result in this paper is a way to bound
the bottleneck distance between these two persistence diagrams when the covers are not
necessarily good. Using this result we provide an approximation to the \Cech{} persistence module built on a finite sample from Euclidean space. 
The method enjoys several favorable properties: it approximates the \Cech{} persistence module
with provable error bounds and allows for size reduction on a heuristic basis, \ie only when the complex becomes
too large to store. Unfortunately, computing the weights of the simplices turns out to be expensive,
making it inapplicable in most settings. To mend this we propose an easy to compute approximation
which performs surprisingly well on real data sets. Using our aforementioned result we also show that
the net-tree construction as introduced by Sheehy~\cite{Sheehy13} and Dey \textit{et al.}~\cite{dey:2012:simplicialmaps} works well for the \Cech{} complex in
Euclidean space. This approach enjoys very powerful theoretical bounds, \eg a linear growth in
the number of simplices as a function of sampled points. In practice, however, it is difficult to 
prevent the complex from growing too large. 

Having implemented an algorithm to compute persistence
diagrams of simplicial complexes connected by simplicial maps we conclude the paper by
applying our approximations to a variety of point samples in Euclidean space. 

To the best of our knowledge, this is the first paper where
persistence computations are performed on simplicial complexes
connected by more general simplicial maps than inclusions.

\subsection{Outline}
In Section \ref{sec:back} we review background material and Dey \textit{et al.}'s algorithm~\cite{dey:2012:simplicialmaps} for computing persistent
homology of simplicial complexes connected by simplicial maps. In particular, we introduce
the concept of sequences of covers, and in Section \ref{sec:hocolim} we give a homotopy colimit
argument which relates the persistence module associated to a sequence of covers to that formed by the covering sets on the level of spaces. This relation is used in Section \ref{sec:sandwich}
to prove a sandwich type theorem for sequences of covers. We give two
approaches to approximating the \Cech{} persistence module in Section \ref{sec:cechrips}. The paper
concludes with Section \ref{sec:experiments} where we compute the persistence diagrams of 
example point clouds in Euclidean space using the aforementioned approximations. 

\subsection{Related work}
In low-dimensional Euclidean space the alpha complex~\cite{alfa} offers a memory efficient way to 
compute the persistence diagrams of a point cloud. Unfortunately, the number of simplices
grows exponentially in the ambient dimension, making it inefficient in high-dimensional
space. The witness complex~\cite{vitne} is a simplicial complex built on a subset of the sample, called landmarks. Unfortunately, the persistence diagrams of the associated filtration may depend heavily on the choice of landmarks.
 Sheehy~\cite{Sheehy13} and later
Dey \textit{et al.}~\cite{dey:2012:simplicialmaps} approximate the Vietoris--Rips complex using net-trees,
and Kerber and Sharathkumar~\cite{KerberS13} arrive at similar results for the \Cech{} complex in Euclidean space 
using quadtrees. Our constructions in Section~\ref{sec:linsize} is an adaption on the work of 
Dey \textit{et al.}~\cite{dey:2012:simplicialmaps} to 
the \Cech{} complex in Euclidean space. The construction in Section \ref{sec:nongood} can be viewed
as a particular type of a graph induced complex~\cite{dey:gic}. Chazal and Oudot~\cite{Chazal:2008:TPR:1377676.1377719} 
prove the results in \secref{hocolim} for the case where all the simplicial maps are inclusions. 

Recent research~\cite{dlot14, zom:tidy, vidit14} provides
methods to reduce the size of simplicial complexes after being stored, e.g.\ to provide faster persistence
computations. Such reductions are not discussed in this paper as we seek to compute persistence diagrams
of point clouds whose filtered complexes are too large to be stored to begin with.

\section{Background material} \label{sec:back} 

In this section we survey prerequisite background material and fix notation. 
We assume familiarity with basic concepts from algebraic topology, and basic
knowledge of persistent homology. For introductions see~\cite{citeulike:1282830} 
and~\cite{harer}, respectively. 

Throughout the paper, all simplicial complexes are assumed to be
finite and unoriented. A simplex is considered a set of vertices, and we write
a $k$-simplex $\{i_0,\dotsc,i_k\}$ as $[i_0,\dotsc,i_k]$. For
a simplicial complex $K$, we will denote its geometric realization by
$|K|$. Moreover, if $f: K\to L$ is a simplicial map between simplicial
complexes, then $|f|: |K| \to |L|$ denotes the continuous map between
their geometric realizations defined by $f$ on the vertices and
extended linearly using barycentric coordinates. The $p$-th singular
homology vector space of a topological space $X$ with coefficients in the field
$\mathbb{Z}_2$ will be denoted by $H_p(X)$, and for a continuous map
$f: X\to Y$ we denote its induced map on homology by $f_*: H_p(X) \to
H_p(Y)$. When $X = |K|$ is the geometric realization of a simplicial
complex, we will make no distinction between the $p$-th simplicial
homology vector space of $K$ and the $p$-th singular homology vector space of
$|K|$. Cohomology vector spaces over $\mathbb{Z}_2$ are similarly
denoted by $H^p(X)$.

A collection of open sets $\mathcal{U} = \{U_i \setsuchthat i\in I\}$ indexed by a finite set $I$ is said to be a
\idf{(finite) cover} of $\cup_{i\in I} U_i$. The \idf{nerve}
$N\mathcal{U}$ of the cover $\mathcal{U}$ is the simplicial complex
with vertex set $I$ and a $k$-simplex $[i_0, \ldots, i_k]\in
N\mathcal{U}$ if $U_{i_0} \cap\cdots\cap U_{i_k} \neq \emptyset$. Let
$\mathcal{U} = \{ U_i \setsuchthat i\in I\}$ and $\mathcal{V} = \{ V_j
\setsuchthat j \in J\}$ be covers of topological spaces $U\subseteq
V$. A map of sets $F: I\to J$ is said to be a \idf{map of covers} if
$U_{i}\subseteq V_{F(i)}$ for all $i\in I$. It is easy to check that
$F$ extends to a simplicial map $F: N\mathcal{U} \to N\mathcal{V}$
between the nerves of the covers. By a \idf{sequence of covers} we
will mean a collection of covers $\{\mathcal{U}(\alpha)\setsuchthat \alpha\in
  A\subset[0,\infty)\}$, each indexed respectively by $I(\alpha)$,
together with maps of covers $F^{\alpha, \alpha^\prime} : I(\alpha) \to
I(\alpha^\prime)$ such that $F^{\alpha,\alpha} = \id$ and
$F^{{\alpha},{\alpha^{\prime\prime}}} =
F^{{\alpha^\prime},{\alpha^{\prime\prime}}}\circ
F^{{\alpha},{\alpha^\prime}}$ for all $\alpha^{\prime\prime} \geq
\alpha^\prime \geq \alpha$. Such a sequence will be denoted by a pair
$(\mathcal{U}, F)$. Similarly, for any sequence of covers we have an
induced \idf{sequence of nerves} which will be denoted by
$(N\mathcal{U}, F)$.

\subsection{Persistence modules}
A \idf{persistence module} $\mathbb{V}$ over $A\subseteq\mathbb{R}$ is
a collection of $\mathbf{k}$-vector spaces $\{V(\alpha) \setsuchthat \alpha\in A \}$
and linear maps $v^{\alpha,{\alpha^\prime}}: V(\alpha) \to
V(\alpha^\prime)$ for all $\alpha\leq\alpha^\prime$ such that
$v^{\alpha,\alpha} = \id$ and $v^{{\alpha},{\alpha^{\prime\prime}}} =
v^{{\alpha^\prime},{\alpha^{\prime\prime}}}\circ
v^{{\alpha},{\alpha^\prime}}$. The \idf{direct sum} of two persistence
modules $\mathbb{U}$ and $\mathbb{W}$, both indexed over the same set,
is the persistence module $\mathbb{V} = \mathbb{U}\oplus\mathbb{W}$
where $V(\alpha) = U(\alpha)\oplus W(\alpha)$ and
$v^{\alpha,{\alpha^\prime}} = u^{\alpha,{\alpha^\prime}}\oplus
w^{\alpha,{\alpha^\prime}}$. We say that $\mathbb{V}$ is
\idf{indecomposable} if the only decompositions of $\mathbb{V}$ are
the trivial decompositions $0\oplus\mathbb{V}$ and $\mathbb{V}\oplus
0$.
\begin{definition}
  Let $J\subseteq A$ be an interval, \ie if $s,t\in J$ and $s<r<t$
  then $r\in J$. The \idf{interval module over $J$} is the persistence
  module $\mathbb{I}^J$ defined by
  \begin{align*}
    I^J(\alpha) = 
    \begin{cases}
      \mathbf{k} & \text{if } \alpha\in J\\
      0 & \text{otherwise}
    \end{cases}
  \end{align*}
  and $i^{\alpha,{\alpha^\prime}} = \id:I^J(\alpha)\to I^J(\alpha')$ whenever $\alpha\leq\alpha^\prime \in J$ and 0 otherwise.
\end{definition}

It is not difficult to show that $\mathbb{I}^J$ is
indecomposable, and the Krull--Remak--Schmidt--Azumaya
theorem~\cite{azumaya1950} tells us that if
\begin{align*}
  &\mathbb{V}\iso\bigoplus_{l\in L} \mathbb{I}^{J_l}  &\mathbb{V}\iso\bigoplus_{m\in M}\mathbb{I}^{K_m},
\end{align*}
then there is a bijection $\sigma: L\to M$ such that $J_l =
K_{\sigma(l)}$ for all $l\in L$. So whenever $\mathbb{V}$ admits such
a decomposition we can characterize it by the multiset $\{J_l
\setsuchthat l \in L\}$ of intervals called the \idf{persistence
  diagram} $\mathbf{D}(\mathbb{V})$ of $\mathbb{V}$. An
interval $(b,d)\in\mathbf{D}(\mathbb{V})$ represents a \idf{feature}
of $\mathbb{V}$ with \idf{birth} and \idf{death} time $b$ and $d$,
respectively. A persistence diagram is usually
depicted as a collection of points in $(\mathbb{R}\cup\{\pm\infty\})^2$. A recent theorem by
Crawley-Boevey~\cite{2012arXiv1210.08192C} asserts that $\mathbb{V}$
admits a decomposition into interval modules if $V_\alpha$ is
finite-dimensional for all $\alpha\in\mathbb{R}$. For an example of a
persistence module which does not admit an interval decomposition, see~\cite{2012arXiv1207.3674C}.

To every sequence of covers $(\mathcal{U}, F)$ we have an associated persistence
module $(H_p(N\mathcal{U}), F_*)$ with vector spaces
$\{H_p(N\mathcal{U}(\alpha)) \setsuchthat \alpha\in A\subseteq [0,\infty) \}$ and
maps $(F^{\alpha,{\alpha^\prime}})_*$. As the covers are finite, all
the homology vector spaces will have finite dimension, and thus the persistence diagrams
are well-defined. In particular, if $P\subseteq M$ is a finite set of
points in a metric space $M$, and $B(p; \alpha)$ is the open ball
of radius $\alpha$ centered at $p$, we get a sequence of covers by
defining $B(p;0) = \{p\}$, $\mathcal{U}(P;\alpha) = \{B(p; \alpha) \setsuchthat p \in P\}$
and $F=\id$. The induced sequence of nerves is known as the
\idf{\Cech{} filtration} and the associated persistence module is the
\idf{\Cech{} persistence module}. In the
remainder of this paper $\mathcal{C}(P; \alpha)$ denotes the nerve
of the \Cech{} filtration of $P$ at scale $\alpha$. 

Another popular construction is the Vietoris--Rips complex
$\mathcal{R}(P;\alpha)$ which is defined as the largest simplicial
complex with the same $1$-skeleton as $\mathcal{C}(P; \alpha)$. By
definition, it follows that $\mathcal{C}(P; \alpha) \subseteq
\mathcal{R}(P; \alpha)$, and for $P\subseteq\mathbb{R}^n$, it is also
true that $\mathcal{R}(P; \alpha) \subseteq \mathcal{C}(P;
\sqrt{2}\alpha)$~\cite{deSilva_Ghrist_AGT:2007}. 

\subsection{Metrics and approximations}
Let $\Delta$ denote the multiset of all pairs
$(x,x)\in(\mathbb{R}\cup\{\pm\infty\})^2$, each with countably infinite
multiplicity. A \idf{partial matching} between two persistence
diagrams $D$ and $D'$ is a bijection $\gamma: B\cup\Delta \to
B'\cup\Delta$, and we denote all such by $\Gamma(D,D')$.

The following defines a metric on persistence diagrams:
\begin{definition}
  The \idf{bottleneck distance} between two persistence diagrams
  $B$ and $B'$ is 
  \begin{equation*}
    \bottleneck(B,B') = \inf_{\gamma\in\Gamma(D,D')}\sup_{(b,d)\in B} ||(b,d)-\gamma((b,d))||_\infty
  \end{equation*}
  where
  \begin{equation*}
    ||(b_1,d_1)-(b_2,d_2)||_\infty = \max(|b_1-b_2|, |d_1-d_2|).
  \end{equation*}
\end{definition}
The theory of
\emph{interleavings}~\cite{Chazal:2009:PPM:1542362.1542407} offers a
generalization of the bottleneck distance to persistence modules that
do not admit a decomposition into indecomposables. Importantly, if
there exists an $\epsilon$-interleaving between two persistence
modules, then their bottleneck distance is at most $\epsilon$. In this
paper we adopt the conventions of \cite{Sheehy13, KerberS13} and use a
slight reformulation of the ordinary theory of interleavings.

\begin{definition}
  Two persistence modules $\mathbb{U}$ and $\mathbb{V}$ indexed over
  $[0,\infty)$ are said to be $c$-approximate if there exist a
  constant $c\geq 1$ and two families of homomorphisms $\{\phi_\alpha
  : U(\alpha)\to V(c\alpha)\}_{\alpha\geq 0}$ and $\{\psi_\alpha:
  V(\alpha) \to U(c\alpha)\}_{\alpha\geq 0}$ such that the following
  four diagrams commute for all $\alpha\leq \alpha^\prime$:
  \begin{equation*}
    \begin{tikzcd}
      U(\frac{\alpha}{c}) \arrow{dr}\arrow{rrr} &~&~&U(c\alpha^\prime)& U(c\alpha) \arrow{r}
      &U(c\alpha^\prime) \\
      ~& V(\alpha)\arrow{r}
      & V(\alpha^\prime)\arrow{ur}&  V(\alpha) \arrow{r}\arrow{ur} 
      & V(\alpha^\prime)\arrow{ur} & ~\\
      ~&U(\alpha)\arrow{r} & U(\alpha^\prime) \arrow{dr} & U(\alpha)\arrow{r}\arrow{dr} & U(\alpha^\prime)\arrow{dr}& ~\\
      V(\frac{\alpha}{c})\arrow{ru}\arrow{rrr}&~&~&V(c\alpha^\prime) & V(c\alpha)\arrow{r} & V(c\alpha^\prime)
    \end{tikzcd}
  \end{equation*}
  \label{def:capprox}
\end{definition}
The following theorem is immediate from the theory of interleavings~\cite{Chazal:2009:PPM:1542362.1542407}.
\begin{theorem}
  If $\mathbb{U}$ and $\mathbb{V}$ are $c$-approximate, then their
  bottleneck distance is bounded by $\log c$ on the $\log$-scale.
  \label{teo:bottdistance}
\end{theorem}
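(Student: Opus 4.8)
The plan is to unwind the definition of $c$-approximate (Definition~\ref{def:capprox}) and produce from it a genuine $\epsilon$-interleaving in the classical sense of~\cite{Chazal:2009:PPM:1542362.1542407}, after which the standard interleaving-implies-bottleneck stability result does the rest. The key observation is that the rescaling $\alpha\mapsto c\alpha$ on the linear index $[0,\infty)$ becomes an additive shift under the logarithmic reparametrization $\alpha\mapsto\log\alpha$ (with the convention $\log 0=-\infty$). Concretely, I would set $\widetilde{U}(t)=U(e^t)$ and $\widetilde{V}(t)=V(e^t)$ for $t\in[-\infty,\infty)$, so that $\widetilde{U}$ and $\widetilde{V}$ are persistence modules over the shifted index, and the families $\{\phi_\alpha\}$, $\{\psi_\alpha\}$ of Definition~\ref{def:capprox} become families $\{\widetilde{\phi}_t:\widetilde{U}(t)\to\widetilde{V}(t+\log c)\}$ and $\{\widetilde{\psi}_t:\widetilde{V}(t)\to\widetilde{U}(t+\log c)\}$.

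The four commuting diagrams in Definition~\ref{def:capprox} then translate, verbatim, into the two pairs of conditions defining a $\log c$-interleaving: the two ``triangle'' diagrams (the ones with the long horizontal arrow $U(\alpha/c)\to U(c\alpha')$, and its mirror image) say precisely that $\widetilde{\psi}\circ\widetilde{\phi}$ and $\widetilde{\phi}\circ\widetilde{\psi}$ agree with the internal structure maps of $\widetilde{U}$ and $\widetilde{V}$ over a shift of $2\log c$; the two ``square'' diagrams say that $\widetilde{\phi}$ and $\widetilde{\psi}$ are compatible with the structure maps, \ie are morphisms of persistence modules up to the shift. This is exactly an $\epsilon$-interleaving with $\epsilon=\log c$. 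One then invokes the interleaving stability theorem of Chazal \textit{et al.}~\cite{Chazal:2009:PPM:1542362.1542407}, which gives $\bottleneck(\mathbf{D}(\widetilde{U}),\mathbf{D}(\widetilde{V}))\leq\log c$; since $\mathbf{D}(\widetilde{U})$ and $\mathbf{D}(\widetilde{V})$ are precisely the persistence diagrams of $\mathbb{U}$ and $\mathbb{V}$ read on the $\log$-scale, the claim follows.

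The only mild subtlety — and the step I would be most careful about — is the behavior at $\alpha=0$, where $\log 0=-\infty$, and the fact that Definition~\ref{def:capprox} requires $c\geq 1$ so that $\alpha/c\leq\alpha\leq c\alpha$ and all the maps in the diagrams are legitimate structure maps. Working over $[-\infty,\infty)$ rather than $\mathbb{R}$ is harmless: interleaving stability holds over any totally ordered index set, and the point $-\infty$ contributes no new diagram points. Everything else is a direct dictionary translation, so I would keep the proof short: state the reparametrization, note that the four diagrams become the interleaving identities, and cite Theorem~4.11 of~\cite{Chazal:2009:PPM:1542362.1542407} (the algebraic stability theorem) for the conclusion.
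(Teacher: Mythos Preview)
Your proposal is correct and matches the paper's approach: the paper does not give a proof at all but simply declares the theorem ``immediate from the theory of interleavings~\cite{Chazal:2009:PPM:1542362.1542407}'', and what you have written is precisely the unpacking of that word ``immediate'' --- the logarithmic reparametrization turning a $c$-approximation into a $\log c$-interleaving, followed by the algebraic stability theorem. Your treatment is more careful than the paper's, but there is no difference in strategy.
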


The above result can be seen as a general version of the relationship
between the \Cech{} and Vietoris--Rips filtrations. Indeed, while the
bottleneck distance between their persistence diagrams may be
arbitrarily large, the inclusions
\begin{equation*}
  \mathcal{C}(P;\alpha) \subseteq \mathcal{R}(P;\alpha) \subseteq \mathcal{C}(P;\sqrt{2}\alpha)  
\end{equation*}
ensure that a feature $(b,d)$ in the Vietoris--Rips persistence module
is also a feature in the \Cech{} persistence module if
$d-b\geq\sqrt{2}b$, and vice versa.

\subsection{Computing persistent homology using
  annotations} \label{sec:computing} 

Many widely implemented and used algorithms for computing persistent
homology assume that the maps in the persistence module are induced by
inclusions of simplicial complexes, \ie that the underlying sequence
is a filtration. As shall become clear, we will need to compute in the
setting of general simplicial maps.

\begin{definition}
  A surjective simplicial map $f:K\to K'$ with the property that there
  exist distinct $[a],[b]\in K$ such that
  \begin{align*}
    f(\sigma) = \begin{cases}
      \sigma\setminus\{b\} &\quad\text{ if $a,b\in\sigma$ } \\
      \{a\}\cup\sigma\setminus\{b\} &\quad\text{ if $a\notin\sigma$, $b\in\sigma$ }  \\
      \sigma &\quad\text{ otherwise}
      \end{cases}
  \end{align*}
  is called an \idf{edge contraction of $[a,b]$ to $[a]$}. Simplices
  $\sigma,\sigma'\in K$ are called \idf{mirror simplices} (for $f$) if
  $f(\sigma)=f(\sigma')$.
\end{definition}
We will often refer to an edge contraction like that above by
$[a,b]\mapsto[a]$. Since up to isomorphism any simplicial map $K\to
K'$ decomposes into a finite sequence of inclusions and edge
contractions, we only need to deal with those two types and adjust the
persistence module indices accordingly to reflect the addition of
extra maps. Likewise, as is normal, we decompose inclusions into ones
of the form $K\to K\cup \{\sigma\}$ and refer to these as ``adding a
simplex $\sigma$''.

We will use Dey \textit{et al.}'s method of \emph{persistence
  annotations}~\cite{dey:2012:simplicialmaps} to compute (the
persistence diagrams of) persistence modules with simplicial maps, and
now quickly review their algorithm and our implementation details.

The method of annotation tracks homology with $\mathbb{Z}_2$
coefficients across a persistence module by storing the value of all
cohomology generators at each simplex and updating these
``annotations'' to reflect the inclusion of a simplex or the
contraction of an edge. Care should be taken to notice a slight
difference in terminology: our definition of annotations reflects
Dey's \emph{valid annotations}.
\begin{definition}
  An \idf{annotation} for a simplicial complex $K$ is a linear map
  $\Phi_p : C_p(K)\to\mathbb{Z}_2^n$ with the property that
  \begin{equation*}
    \varphi_1 = [c\mapsto\Phi_p(c)_1],\dotsc, \varphi_n = [c\mapsto\Phi_p(c)_n]
  \end{equation*}
  is a basis for $H^p(K)$. Here $\Phi_p(c)_i$ denotes
  the $i$'th component of $\Phi_p(c)\in\mathbb{Z}_2^n$.
\end{definition}

A key observation is the following: the persistent homology
of a sequence of simplicial complexes can be obtained by dualizing on the level of chains
and taking cohomology. This is true since when working over $\mathbb{Z}_2$ (or any field),
the map $\alpha:H^p(K) \to {\rm Hom}(H_p(K), \mathbb{Z}_2)$ defined by
$\alpha([f])([c]) = f(c)$ is an isomorphism. Thus, intervals
in persistent cohomology are dual to intervals in persistent homology. Therefore,
we shall interchangeably speak of a homology class born at persistence index $i$
as a cohomology class in the opposite direction dying at persistence index $i$. 

By storing the value of $\Phi_p$ at each $p$-simplex, that simplex'
contribution to the (co)homology vector space is known and so allows
us to only make changes to homology near the site of a
contraction. This ``locality'' of the changes introduced by an edge
contraction is summarized in the following
definition~\cite{dey:1999:contraction}, proposition~\cite{attali:2012}
and lemmas.
\begin{definition}
  The \idf{link} of a simplex $\sigma$ in a simplicial complex $K$ is the set
  \begin{equation*}
    \Link_K\sigma = \{\tau\setminus\sigma \setsuchthat \sigma\subseteq\tau\in K\}.
  \end{equation*}
  An edge $[a,b]\in K$ satisfies the \idf{link condition} if
  $\Link_K[a]\cap\Link_K[b] = \Link_K[a,b]$.
\end{definition}
When the simplicial complex in question is clear, we shall simply
write $\link$ for $\link_K$. 
\begin{proposition} \label{prop:lkcond} The contraction $f:K\to K'$ of
  an edge that satisfies the link condition induces a homotopy
  equivalence $|f|:|K|\to |K'|$, and hence an isomorphism
  $f_\ast:H_\ast(K)\to H_\ast(K')$.
\end{proposition}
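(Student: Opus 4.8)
The plan is to build an explicit homotopy inverse to $|f|$ supported near the contracted edge, leveraging the link condition to guarantee that the relevant subcomplexes are cones and hence collapsible. First I would set up notation: write $e=[a,b]$ for the edge being contracted, and recall that the link condition says $\link[a]\cap\link[b]=\link[a,b]$. The key geometric consequence is that the open stars of $a$ and $b$ in $K$, together with the open star of $a$ in $K'$, all deformation retract onto pieces that are controlled by $\link[a,b]$. The strategy is to show that $|f|$ restricted to a neighborhood of $|a|\cup|b|$ is a homotopy equivalence onto a neighborhood of $|a|$ in $|K'|$, and that both complexes agree (via $f$, which is an isomorphism) away from this neighborhood; then a Mayer--Vietoris / gluing-of-homotopy-equivalences argument closes the proof.

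In more detail, I would proceed as follows. Step one: identify the subcomplex $\mathrm{St}_K[a,b]$ of all simplices containing $a$ or $b$, i.e. the (closed) star $\overline{\mathrm{St}}[a]\cup\overline{\mathrm{St}}[b]$, and its image. Under $f$ this maps onto $\overline{\mathrm{St}}_{K'}[a]$. Step two: show $|\overline{\mathrm{St}}_K[a]\cup\overline{\mathrm{St}}_K[b]|$ is contractible — it is star-shaped with respect to the segment $[a,b]$ because every maximal simplex contains $a$ or $b$, so one can linearly retract onto the edge $|e|$ and then to a point; similarly $|\overline{\mathrm{St}}_{K'}[a]|$ is a cone with apex $a$, hence contractible. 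Step three: the overlap. Let $L = \overline{\mathrm{St}}_K[a]\cap\overline{\mathrm{St}}_K[b]$, the subcomplex of simplices containing both $a$ and $b$ or lying in a common simplex with the edge; its "boundary part" is $[a,b]*(\link[a]\cap\link[b]) = [a,b]*\link[a,b]$ precisely by the link condition. Under $f$ this collapses to the cone $[a]*\link[a,b]$, which is again contractible. So $|f|$ sends a contractible subspace to a contractible subspace. Step four: outside the closed stars, $f$ is a bijection on simplices, hence $|f|$ is a homeomorphism there; on the intersection of "outside" with "the stars" (namely the boundary spheres/links), $|f|$ is again a homotopy equivalence between contractible pieces. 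Step five: invoke the gluing lemma for homotopy equivalences (a pushout of cofibrations where the map is a homotopy equivalence on each of the three pieces is a homotopy equivalence on the pushout) to conclude $|f|:|K|\to|K'|$ is a homotopy equivalence. The isomorphism on homology is then automatic, and the $\mathbb{Z}_2$-coefficient statement follows since homotopy equivalences induce isomorphisms on homology with any coefficients.

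The main obstacle I expect is Step three — verifying that the link condition is exactly what makes the relevant overlap a cone (equivalently, that no simplex of $K'$ is "created with the wrong boundary" by the contraction). Concretely, without the link condition one can have $\tau\in\link[a]\cap\link[b]$ with $\tau\notin\link[a,b]$, meaning $[a]\cup\tau$ and $[b]\cup\tau$ are simplices of $K$ but $[a,b]\cup\tau$ is not; then $f$ identifies the two distinct simplices $[a]\cup\tau$ and $[b]\cup\tau$ (they are mirror simplices), and this identification is not a homotopy equivalence locally — it pinches a sphere. So the heart of the argument is the dimension count / combinatorial check showing that under the link condition the map $f$ restricted to the closed stars is, up to homotopy, just the collapse of the cone-with-two-apices $[a,b]*\link[a,b]$ onto the cone $[a]*\link[a,b]$, which is a standard collapse. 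An alternative, perhaps cleaner, route is to cite the discrete-geometry literature directly (this is essentially Dey--Edelsbrunner--Guha and Attali--Lieutier--Salinas, referenced in the text as \cite{dey:1999:contraction, attali:2012}) and simply reproduce the cone argument; but since the statement is used as a black box in what follows, giving the gluing argument in full is the honest approach.
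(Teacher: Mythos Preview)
The paper does not prove this proposition at all --- it states it with a citation to \cite{attali:2012} (and the surrounding definition is attributed to \cite{dey:1999:contraction}), so there is no ``paper's own proof'' to compare against. Your sketch is essentially the argument one finds in those references, and the overall structure (decompose into the union of closed stars versus the complement, check the map is a homeomorphism on the complement and a map of contractible pieces on the stars, then glue) is correct.

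One point worth tightening: in Step~2 you claim $|\overline{\mathrm{St}}[a]\cup\overline{\mathrm{St}}[b]|$ is contractible because it is ``star-shaped with respect to the segment $[a,b]$'' and one can ``linearly retract onto the edge $|e|$''. As written this does not quite work: the linear retraction toward $a$ on $\overline{\mathrm{St}}[a]$ and the one toward $b$ on $\overline{\mathrm{St}}[b]$ need not agree on the overlap, so you do not get a globally defined homotopy for free. The contractibility of the union is in fact a \emph{consequence} of your Step~3 computation: under the link condition one has $\overline{\mathrm{St}}[a]\cap\overline{\mathrm{St}}[b]=\overline{\mathrm{St}}[a,b]=[a,b]\ast\link[a,b]$, which is a cone and hence contractible; then the union of two cones along a contractible intersection is contractible by the very gluing lemma you invoke in Step~5. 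So Steps~2 and~3 should be merged (or reordered), with the link condition doing the work up front. With that adjustment the argument is sound.
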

\begin{lemma} 
  If $[a,b]\in K$, then
  $\link_K[a,b]\subseteq\link_K[a]\cap\link_K[b]$.
\end{lemma}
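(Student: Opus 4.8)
The plan is to show the inclusion $\link_K[a,b]\subseteq\link_K[a]\cap\link_K[b]$ directly from the definition of the link. Let $\rho\in\link_K[a,b]$. Then by definition there is a simplex $\tau\in K$ with $[a,b]\subseteq\tau$ and $\rho=\tau\setminus[a,b]=\tau\setminus\{a,b\}$. Since $\{a,b\}\subseteq\tau$, in particular $\{a\}\subseteq\tau$, and removing $b$ first (if present) still leaves a valid simplex of $K$ only if that subset is in $K$; here we use the standard fact that a simplicial complex is closed under taking faces, so $\tau\setminus\{b\}\in K$ and $a\in\tau\setminus\{b\}$. Then $\rho=(\tau\setminus\{b\})\setminus\{a\}$ witnesses $\rho\in\link_K[a]$. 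Symmetrically, $\tau\setminus\{a\}\in K$ contains $b$, and $\rho=(\tau\setminus\{a\})\setminus\{b\}$ witnesses $\rho\in\link_K[b]$. Hence $\rho\in\link_K[a]\cap\link_K[b]$, which is what we wanted.

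There is essentially no obstacle here: the only subtlety is being careful that the witnessing simplices $\tau\setminus\{b\}$ and $\tau\setminus\{a\}$ genuinely lie in $K$, which is immediate because simplicial complexes are downward closed under the face relation (finite, unoriented simplicial complexes, as fixed in the Background section). I would also note in passing that $[a,b]\in K$ guarantees $\link_K[a,b]$ is nonempty-relevant only in the sense that both $[a]$ and $[b]$ are themselves vertices of $K$, so the right-hand side is well-defined. One could equally phrase the argument via the general monotonicity statement $\link_K\sigma'\supseteq\{\,\tau\setminus\sigma' : \sigma'\subseteq\sigma\subseteq\tau\in K\,\}$ for $\sigma'\subseteq\sigma$, but writing it out elementwise as above is the cleanest and most transparent route.

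Finally, I would keep the proof short—two or three sentences—since this lemma is purely a bookkeeping step feeding into the discussion of the link condition and Proposition~\ref{prop:lkcond}; its role is just to make precise that the link condition $\link_K[a]\cap\link_K[b]=\link_K[a,b]$ is really a statement that one containment (the easy one, proved here) is always an equality rather than merely an inclusion.
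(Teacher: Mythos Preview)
Your proof is correct and follows essentially the same route as the paper's: take $\rho\in\link_K[a,b]$ witnessed by $\tau\supseteq[a,b]$, pass to the faces $\tau\setminus\{a\}$ and $\tau\setminus\{b\}$ (which lie in $K$ by downward closure), and use these as witnesses for $\rho\in\link_K[b]$ and $\rho\in\link_K[a]$ respectively. The paper's proof is exactly this argument, written in the short form you suggest at the end.
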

\begin{proof}
  Suppose $\eta\in\link[a,b]$. Then there exists a $\tau\in K$ with
  $[a,b]\subseteq\tau$ and $\eta=\tau\setminus[a,b]$. Since $K$ is a
  simplicial complex, it also contains $\tau'=\tau\setminus[a]$ and
  $\tau''=\tau\setminus[b]$. We have $[b]\subseteq\tau'$ and
  $\eta=\tau'\setminus[b]$, so $\eta\in\link[b]$. The same argument
  using $\tau''$ gives that $\eta\in\link[a]$.
\end{proof}
For the following lemma we shall write
$L_K(a,b)=(\link_{K}[a]\cap\link_{K}[b])\setminus\link_{K}[a,b]$.
\begin{lemma} \label{lem:lkcond} If $\eta \in L_K(a,b)$, then $K'=
  K\cup\{\eta\cup[a,b]\}$ is also a simplicial complex, and moreover
  $L_{K'}(a,b) = L_K(a,b)\setminus\{\eta\}$.
\end{lemma}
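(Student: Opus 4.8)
The plan is to verify the two assertions separately, both by direct set-theoretic manipulation, since the statement is really a bookkeeping fact about links.

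\textbf{First claim: $K' = K \cup \{\eta \cup [a,b]\}$ is a simplicial complex.} Since $K$ is already a simplicial complex, it suffices to check that every proper face of the new simplex $\eta \cup [a,b]$ lies in $K'$; equivalently, that the \emph{facets} (codimension-one faces) of $\eta\cup[a,b]$ lie in $K$, since a complex is determined by closure under taking any single vertex away. A facet of $\eta \cup [a,b]$ is obtained by deleting one vertex. If we delete a vertex of $\eta$, we obtain $\eta' \cup [a,b]$ where $\eta' = \eta \setminus \{v\}$ for some $v \in \eta$; here I would use that $\eta \in \link_K[a,b]$ is not automatic — rather, $\eta \in L_K(a,b) \subseteq \link_K[a] \cap \link_K[b]$, so $\eta \cup [a]$ and $\eta \cup [b]$ are in $K$, hence so are their faces $\eta' \cup [a]$ and $\eta' \cup [b]$, and... this only gives $\eta'\cup[a,b]$ directly if I can produce it as a face of something in $K$. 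The cleaner route: the facets of $\eta\cup[a,b]$ are exactly $\eta\cup[a]$ (delete $b$), $\eta\cup[b]$ (delete $a$), and $(\eta\setminus\{v\})\cup[a,b]$ for $v\in\eta$ (delete $v\in\eta$). The first two are in $K$ because $\eta\in\link_K[a]\cap\link_K[b]$. For the third, I would argue by induction on $|\eta|$: if $\eta'=\eta\setminus\{v\}$, then $\eta'\in L_K(a,b)$ as well (it lies in $\link[a]\cap\link[b]$ since these are subcomplexes, and it is not in $\link[a,b]$ because $\link[a,b]$ is a subcomplex and would then contain $\eta$, contradicting $\eta\notin\link[a,b]$), so by induction $\eta'\cup[a,b]\in K\cup\{\eta'\cup[a,b]\}$; but $\eta'\subsetneq\eta$ has strictly smaller cardinality, so in fact the inductive hypothesis applied at the level of the smaller simplex tells us $\eta'\cup[a,b]$ is already in $K$ (it was added at an earlier stage or was there all along). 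The base case $\eta=\emptyset$ is the hypothesis $[a,b]\in K$. This closes the first claim.

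\textbf{Second claim: $L_{K'}(a,b) = L_K(a,b)\setminus\{\eta\}$.} Write out both sides. We have $\link_{K'}[a] = \link_K[a] \cup \{\tau \setminus [a] \mid [a]\subseteq \tau, \ \tau \text{ a face of } \eta\cup[a,b]\}$; the new faces of $\eta\cup[a,b]$ containing $[a]$ are $\eta'\cup[a]$ and $\eta'\cup[a,b]$ for $\eta'\subseteq\eta$, and removing $[a]$ from these gives $\eta'$ and $\eta'\cup[b]$. But all $\eta'\subseteq\eta$ already lie in $\link_K[a]$ (since $\eta\in\link_K[a]$ and links are subcomplexes), so the only genuinely new element of $\link_{K'}[a]$ is $\eta\cup[b]$ (from $\eta'=\eta$). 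By the symmetric computation, $\link_{K'}[b] = \link_K[b]\cup\{\eta\cup[a]\}$ (new element $\eta\cup[a]$). Now intersect: $\link_{K'}[a]\cap\link_{K'}[b] = (\link_K[a]\cap\link_K[b]) \cup X$, where $X$ collects the new overlaps; since $\eta\cup[b]\notin\link_K[b]$ and $\eta\cup[a]\notin\link_K[a]$, and $\eta\cup[b]\ne\eta\cup[a]$ (as $a\ne b$), the only way a new element appears in the intersection is if, say, $\eta\cup[b]\in\link_{K'}[b]$, i.e. $\eta\cup[b]\in\link_K[b]$ or $\eta\cup[b]=\eta\cup[a]$ — neither holds — so in fact $X=\emptyset$ and $\link_{K'}[a]\cap\link_{K'}[b]=\link_K[a]\cap\link_K[b]$. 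Finally $\link_{K'}[a,b] = \link_K[a,b]\cup\{\eta'\mid \eta'\subseteq\eta\} = \link_K[a,b]\cup\{\text{faces of }\eta\}$; since the proper faces of $\eta$ already lay in $\link_K[a,b]$? — no, that is not given — rather, only $\eta$ itself is the new top face, and all its proper faces $\eta'\subsetneq\eta$: I would argue these proper faces were \emph{already} in $\link_K[a,b]$, because $\eta\notin\link_K[a,b]$ but this does not forbid proper faces; here one uses minimality implicit in the setup, or simply notes that $L_{K'}(a,b) = (\link_K[a]\cap\link_K[b])\setminus\link_{K'}[a,b] = (\link_K[a]\cap\link_K[b])\setminus(\link_K[a,b]\cup(\text{faces of }\eta)) = L_K(a,b)\setminus(\text{faces of }\eta)$, and since every proper face $\eta'\subsetneq\eta$ of $\eta$ either is not in $\link_K[a]\cap\link_K[b]$ (impossible, links are subcomplexes) or lies in $\link_K[a,b]$, we conclude $L_K(a,b)\cap\{\eta'\subsetneq\eta\}=\emptyset$, hence subtracting the faces of $\eta$ removes only $\eta$: $L_{K'}(a,b) = L_K(a,b)\setminus\{\eta\}$.

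\textbf{Main obstacle.} The delicate point — and the one I would write most carefully — is the claim that no \emph{proper} face $\eta'\subsetneq\eta$ lies in $L_K(a,b)$, equivalently that every proper face of $\eta$ is already in $\link_K[a,b]$. This is exactly what guarantees that adding the single simplex $\eta\cup[a,b]$ shrinks $L$ by exactly one element rather than potentially removing a whole down-set. I expect this follows from the hypothesis $\eta\in L_K(a,b)$ together with the standing assumption that $K$ is a simplicial complex (so $\link_K[a]\cap\link_K[b]$ is a subcomplex): if some proper face $\eta'$ of $\eta$ were in $L_K(a,b)$, then $\eta'\cup[a,b]\notin K$, yet $\eta'\cup[a]$ and $\eta'\cup[b]$ are in $K$ — and this is consistent, so one must instead invoke that this lemma is applied iteratively (as Lemma~\ref{lem:lkcond}'s ``moreover'' clause signals) starting from the largest such $\eta$, making the proper-face condition an invariant maintained by the induction rather than something proved from scratch. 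I would state this invariant explicitly at the start of the proof.
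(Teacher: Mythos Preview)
Your overall strategy matches the paper's, but there are two genuine problems in the execution, and one place where you overcomplicate and manufacture a difficulty that is not there.

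\textbf{The second claim is simpler than you make it.} Since $K'=K\cup\{\eta\cup[a,b]\}$ adds exactly one simplex, the only simplex in $K'\setminus K$ containing $[a,b]$ is $\eta\cup[a,b]$ itself. Hence $\link_{K'}[a,b]=\link_K[a,b]\cup\{\eta\}$, full stop; the paper records exactly this. Your formula $\link_{K'}[a,b]=\link_K[a,b]\cup\{\eta'\mid\eta'\subseteq\eta\}$ would be correct only if you had added the whole down-closure of $\eta\cup[a,b]$, which you have not. Because of this overcount you are then forced to argue that no proper face of $\eta$ lies in $L_K(a,b)$, and this is what you label the ``main obstacle''. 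For the second claim there is no obstacle: combine $\link_{K'}[a]\cap\link_{K'}[b]=\link_K[a]\cap\link_K[b]$ (which you derive correctly) with $\link_{K'}[a,b]=\link_K[a,b]\cup\{\eta\}$ and you are done.

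\textbf{The induction for the first claim is broken.} You assert that $\eta'=\eta\setminus\{v\}$ lies in $L_K(a,b)$ ``because $\link[a,b]$ is a subcomplex and would then contain $\eta$''. Downward closure goes the other way: from $\eta'\in\link_K[a,b]$ you cannot deduce $\eta\in\link_K[a,b]$. So it is entirely possible that $\eta'\in\link_K[a,b]$, and your inductive hypothesis does not apply. Even if it did, the hypothesis only says $K\cup\{\eta'\cup[a,b]\}$ is a complex, not that $\eta'\cup[a,b]\in K$, which is what you need.

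\textbf{The real fix is minimality, and you have it backwards.} You are right that something is being assumed about proper faces of $\eta$: the face $\eta'\cup[a,b]$ lies in $K$ iff $\eta'\in\link_K[a,b]$, and the paper's one-line justification silently uses this for all $\eta'\subsetneq\eta$. That holds precisely when $\eta$ is \emph{minimal} in $L_K(a,b)$ (then each proper $\eta'\subsetneq\eta$ is in $\link_K[a]\cap\link_K[b]$ by downward closure but not in $L_K(a,b)$, hence in $\link_K[a,b]$). Your proposal to maintain an invariant by ``starting from the largest such $\eta$'' is the wrong direction: starting from a maximal $\eta$ is exactly the case where faces $\eta'\cup[a,b]$ are missing (take $K$ to be the boundary of the tetrahedron on $\{a,b,c,d\}$ together with the single edge $[a,b]$ removed and re-added without its two incident triangles; then $[c,d]\in L_K(a,b)$ but $[a,b,c,d]$ has missing facets). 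The correct invariant, and the one actually used in the algorithm that invokes this lemma, is to pick $\eta$ minimal at each step.
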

\begin{proof}
  Observe that $[a,b]\not\subseteq\eta$. $K'$ is still a simplicial complex, as all faces of $\eta\cup[a,b]$ are present in $K$ by the
  assumption that $\eta\in L_K(a,b)$. Note that by definition
  \begin{align*}
    &\link_{K'}[a] = \link_K[a]\cup\{\eta\cup[b]\}& 
    &\link_{K'}[b] = \link_K[b]\cup\{\eta\cup[a]\},&
  \end{align*}
  so $\link_{K'}[a]\cap\link_{K'}[b]=\link_{K}[a]\cap\link_{K}[b]$. It
  also follows from the definition that
  \begin{equation*}
    \link_{K'}[a,b] = \link_{K}[a,b]\cup\{\eta\},
  \end{equation*}
  so $L_{K'}(a,b) = L_K(a,b)\setminus\{\eta\}$.
\end{proof}
In summary, we see that to contract an edge we only need to change the
simplicial complex in the vicinity of that edge.

Suppose 
\begin{equation*}
  K=(K_0\xrightarrow[]{f_0}{}K_1 \xrightarrow[]{f_{1}} \dotsm \xrightarrow[]{f_{m-1}} K_m)
\end{equation*}
is a sequence of simplicial complexes (with the $f_i$'s simplicial
maps) whose persistence module
\begin{equation*}
  H_\ast(K)=(H_\ast(K_0)\xrightarrow[]{(f_0)_\ast}{}H_\ast(K_1) \xrightarrow[]{(f_{1})_\ast} \dotsm \xrightarrow[]{(f_{m-1})_\ast} H_\ast(K_m))
\end{equation*}
has been computed, and
write $\Phi_p^i$ for the annotation of $H^p(K_i)$ and $n$ for its
dimension. To compute the persistence module of
\begin{equation*}
  K'=(K_0\xrightarrow[]{f_0}{} \dotsm \xrightarrow[]{f_{m-1}} K_m \xrightarrow[]{f_{m}} K_{m+1}),
\end{equation*}
there are four cases to handle:
\begin{enumerate}
  \item $f_{m}$ adds a single $p$-simplex $\sigma$, and\dots
    \begin{enumerate}
    \item \label{casegenborn} $\Phi_{p-1}^m(\partial\sigma)=0$. This
      corresponds to a generator of $H_p(K')$ being born at
      persistence index $m+1$, or equivalently to a generator of
      $H^p(K')$ dying at $m$ going left (see Proposition~5.2
      in~\cite{dey:2012:simplicialmaps}). Define
      $\Phi_p^{m+1}:C_p(K_{m+1})\to\mathbb{Z}_2^{n+1}$ by
      \begin{align*}
        \Phi_p^{m+1}(\tau) = \begin{cases}
          (\Phi_p^m(\tau)_1,\dotsc,\Phi_p^m(\tau)_n, 0) &\quad\text{ if } \tau\neq\sigma \\
          (0,\dotsc,0, 1) &\quad\text{ if } \tau = \sigma
        \end{cases}
      \end{align*}
      and extending linearly. In other dimensions $q\neq p$, we set
      $\Phi_q^{m+1}=\Phi_q^m$.
    \item \label{casegenkilled}
      $\Phi_{p-1}^m(\partial\sigma)_{i_1}=\dotsm=\Phi_{p-1}^m(\partial\sigma)_{i_l}=1$
      for some $l\geq 1$. In this case $\sigma$ kills a class in
      $H_{p-1}(K')$ at $m+1$, or equivalently gives birth to one of the
      generators $\varphi_{i_1},\dotsc,\varphi_{i_l}$ of $H^{p-1}(K')$
      in the reverse direction (see Proposition~5.2 in~\cite{dey:2012:simplicialmaps}). We kill the youngest homology class,
      say the one numbered $u$ (so $\varphi_u$ is born in the reverse
      direction). Note that $\gamma:K_{m+1}\to\mathbb{Z}_2^n$ defined by
      \begin{align*}
        \gamma(\tau) = \begin{cases}
          \Phi_{p-1}^m(\tau) + \Phi_{p-1}^m(\partial\sigma) &\quad\text{ if } \Phi_{p-1}^m(\tau)_u = 1 \\
          \Phi_{p-1}^m(\tau) &\quad\text{ otherwise}
          \end{cases}
      \end{align*}
      has $0$ in component $u$ of all its values. Define
      $\Phi_{p-1}^{m+1}:C_{p-1}(K_{m+1})\to\mathbb{Z}_2^{n-1}$ as
      $\gamma$ with the $u$-th component removed, and extend linearly.
      In other dimensions $q\neq p-1$, we set
      \begin{align*}
        \Phi_q^{m+1}(\tau) = \begin{cases}
          (0,\dotsc,0) &\quad\text{ if } \tau = \sigma, q=p\\
          \Phi_q^m(\tau) &\quad\text{ otherwise}.
        \end{cases}
      \end{align*}
    \end{enumerate}
  \item $f_{m}$ contracts $[a,b]$ to $[a]$, and\dots
    \begin{enumerate}
    \item \label{caselinkcond} $[a,b]$ satisfies the link
      condition. Let 
      \begin{equation*}
        M_{p-1}=\{\sigma \in K_m \setsuchthat \dim\sigma = p-1 , a\in\sigma \text{ and }\sigma\text{ has a mirror under } f_m\},
      \end{equation*}
      and note that to any $\tau\in M_{p-1}$, there is a unique
      $g_\tau\in K_m$ with $\tau\subseteq g_\tau$, $\dim g_\tau = p$ and
      $[a,b]\subseteq g_\tau$. Define $\Phi_p^{m+1}$ on the
      $p$-simplices of $K_{m+1}$ by
      \begin{align*}
        \Phi_p^{m+1}(\sigma) = \Phi_p^m(\sigma) + \underset{\substack{\sigma\supseteq\tau\in M_{p-1}}}{\sum}\Phi_p^m(g_\tau),
      \end{align*}
      noting that the sum may be empty. This corresponds to Dey's
      ``annotation transfers'' --- see Proposition 4.4 and 4.5
      of~\cite{dey:2012:simplicialmaps} for a more detailed
      explanation.
    \item $[a,b]$ does not satisfy the link
      condition. Lemma~\ref{lem:lkcond} tells us which simplices to
      add, repeatedly hitting the cases~\ref{casegenborn}
      and~\ref{casegenkilled}, until the link condition becomes
      fulfilled\footnote{This must happen after a finite number of
        steps since Lemma~\ref{lem:lkcond} shows that the size of
        $(\link[a]\cap\link[b])\setminus\link[a,b]$ is reduced by one
        every time one of the new simplices is added. Moreover, one
        can in practice expect the number of simplices added to be
        small compared to the size of $K_m$ since only cofaces of
        $[a,b]$ are added.}. Afterwards contracting $[a,b]$ is handled
      by case~\ref{caselinkcond}. Some bookkeeping is of course
      required if one wants to consider the potentially many homology
      changes from the inclusions as occurring at persistence index
      $m+1$.
    \end{enumerate}
\end{enumerate}
Dey \etal show in~\cite{dey:2012:simplicialmaps} (Proposition~5.1)
that $\Phi_\ast^{m+1}$ as constructed above is an annotation for
$H^\ast(K_{m+1})$. With $K_0=\emptyset$ and the associated empty
annotation $\Phi_\ast^0$, then, the above is a correct algorithm for
computing persistent homology.

\subsubsection{Some implementation details}
As suggested in~\cite{boissonnat:2012:efficient,
  boissonnat:2012:simplextree}, the \emph{simplex tree} is a data
structure that is well-suited for storing the simplicial complex in the above
algorithm.

A simplex tree is a \emph{trie} (also called a prefix tree), which is
a tree $T$ that stores a simplicial complex $K$ whose vertices $V$
have a total ordering $\leq$ by the following rules:
\begin{itemize}
  \item $T$ contains a distinguished root.
  \item Every non-root node $n\in T$ carries the data of a label
    $L(n)\in V$. The root is labelled by a distinguished symbol, say
    $\ast$, and we extend the ordering to $\ast < v$ for all $v\in
    V$ to ease notation.
  \item Nodes have zero or more children.
  \item If $n$ is a child of $p$, then $L(n)>L(p)$.
  \item If $n$ and $m$ both are children of $p$, then $L(n)\neq L(m)$.
\end{itemize}
The simplicial complex $K$ to be encoded corresponds to all paths to
the root of $T$, and we write $S(n)\in K$ for the simplex
corresponding to the path from $n\in T$.  We will also refer to the
root having \idf{depth} $0$, and in general a node as having depth
$k+1$ if its parent has depth $k$. Thus $\depth(n) = \dim S(n) + 1$.

In terms of implementation, every node holds a pointer to its parent
and a dictionary\footnote{A dictionary is here any data structure with
  logarithmic lookup time complexity for keys.} of pointers to its
children, keyed on their labels. Furthermore, we augment the tree by
adding to each node a ``cousin pointer'': We call $m$ a \idf{cousin} of $n$
if $\depth(m)=\depth(n)$ and $L(m) = L(n)$. Every node holds a pointer
to one of its cousins in such a way that they form a cyclic linked
list that visits every cousin at the same depth precisely once (per
cycle). In addition, an arbitrary representative of each such cyclic
linked list is maintained in a dictionary keyed on labels and depths.

\figref{simplextree} shows an example of the basic part of a simplex
tree, along with an example of annotations (intermediate data
structures are dropped from the figure, and annotations are attached
directly to the simplices for ease of visualization).

\begin{figure}[htpb]
  \centering
  \begin{tikzpicture}
    \begin{scope}[xshift=0cm, yshift=0cm]      
      \begin{scope}[xshift=0cm, yshift=0cm]
        \coordinate (1) at (0,0);
        \coordinate (2) at (2,0);
        \coordinate (3) at (2,2);
        \coordinate (4) at (0,2);
        \coordinate (5) at (1,1);
        \draw (5) -- (4) -- (3) -- cycle;
        \draw (1) -- (2) -- (5) -- cycle;
        \draw[twosimp] (1) -- (5) -- (4) -- cycle;
        \draw[twosimp] (2) -- (3) -- (5) -- cycle;
        
        \node () at (1.west) [anchor=east] {$1$};
        \node () at (2.east) [anchor=west] {$2$};
        \node () at (3.east) [anchor=west] {$3$};
        \node () at (4.east) [anchor=east] {$4$};
        \node () at (5.north) [anchor=south] {$5$};
       
        \draw (3) -- (4) node[midway,above,sloped,font=\tiny] {$0010$};
        \draw (1) -- (2) node[midway,below,sloped,font=\tiny] {$0001$};
        \draw (4) -- (5) node[midway,above,sloped,font=\tiny] {$0000$};
        \draw (3) -- (5) node[midway,above,sloped,font=\tiny] {$0000$};
        \draw (1) -- (4) node[midway,above,sloped,font=\tiny] {$0000$};
        \draw (2) -- (3) node[midway,below,sloped,font=\tiny] {$0000$};
        \draw (1) -- (5) node[midway,above,sloped,font=\tiny] {$0000$};
        \draw (2) -- (5) node[midway,above,sloped,font=\tiny] {$0000$};
      \end{scope}
      \begin{scope}[xshift=8cm, yshift=2cm, every node/.style={draw,rectangle,inner sep=3pt, minimum size=0.3cm}]
        \node[anchor=east] (root) at (-0.75,0) {$\ast$};
        \node[anchor=west] (4) at (-0.75,-0.75) {$4$};
        \node[anchor=east] (3) at (4.west) {$3$};
        \node[anchor=west] (5) at (4.east) {$5$};
        \node[anchor=east] (2) at (3.west) {$2$};
        \node[anchor=east] (1) at (2.west) {$1$};
        \draw[decorate,decoration=brace] (1.north west) -- (5.north east) coordinate[midway] (litter1) {};
        \draw (litter1) + (0, 2.5pt) -- (root.south);   

        \node[anchor=west] (12) at (-3.5,-1.5) {$2$};
        \node[anchor=west] (14) at (12.east) {$4$};
        \node[anchor=west] (15) at (14.east) {$5$};
        \draw[decorate,decoration=brace] (12.north west) -- (15.north east) coordinate[midway] (litter2);
        \draw (litter2) + (0, 2.5pt) -- (1.south);

        \node[anchor=west] (23) at (-1.7,-1.5) {$3$};
        \node[anchor=west] (25) at (23.east) {$5$};
        \draw[decorate,decoration=brace] (23.north west) -- (25.north east) coordinate[midway] (litter4);
        \draw (litter4) + (0, 2.5pt) -- (2.south);

        \node[anchor=west] (34) at (-0.75,-1.5) {$4$};
        \node[anchor=west] (35) at (34.east) {$5$};
        \draw[decorate,decoration=brace] (34.north west) -- (35.north east) coordinate[midway] (litter6);
        \draw (litter6) + (0, 2.5pt) -- (3.south);

        \node[anchor=west] (45) at (0.5,-1.5) {$5$};
        \draw[decorate,decoration=brace] (45.north west) -- (45.north east) coordinate[midway] (litter7);
        \draw (litter7) + (0, 2.5pt) -- (4.south);

        \node [anchor=east] (145) at (-2.5, -2.25) {$5$};
        \draw[decorate,decoration=brace] (145.north west) -- (145.north east) coordinate[midway] (litter3);
        \draw (litter3) + (0, 2.5pt) -- (14.south);

        \node [anchor=east] (235) at (-1, -2.25) {$5$};
        \draw[decorate,decoration=brace] (235.north west) -- (235.north east) coordinate[midway] (litter8);
        \draw (litter8) + (0, 2.5pt) -- (23.south);
      \end{scope}

    \end{scope}

    \begin{scope}[xshift=0cm, yshift=-3cm]
      \begin{scope}[xshift=0cm, yshift=0cm]
        \coordinate (1) at (0,0);
        \coordinate (2) at (2,0);
        \coordinate (3) at (2,2);
        \coordinate (4) at (0,2);
        \coordinate (5) at (1,1);
        \draw (5) -- (4) -- (3) -- cycle;
        \draw (1) -- (2) -- (5) -- cycle;
        \draw[twosimp] (1) -- (5) -- (4) -- cycle;
        \draw[twosimp] (2) -- (3) -- (5) -- cycle;
        \draw[twosimp] (1) -- (2) -- (5) -- cycle;
        
        \node () at (1.west) [anchor=east] {$1$};
        \node () at (2.east) [anchor=west] {$2$};
        \node () at (3.east) [anchor=west] {$3$};
        \node () at (4.east) [anchor=east] {$4$};
        \node () at (5.north) [anchor=south] {$5$};
        
        \draw (3) -- (4) node[midway,above,sloped,font=\tiny] {$0010$};
        \draw (1) -- (5) node[midway,above,sloped,font=\tiny] {$0000$};
        \draw (2) -- (5) node[midway,above,sloped,font=\tiny] {$0000$};
        \draw (4) -- (5) node[midway,above,sloped,font=\tiny] {$0000$};
        \draw (3) -- (5) node[midway,above,sloped,font=\tiny] {$0000$};
        \draw (1) -- (4) node[midway,above,sloped,font=\tiny] {$0000$};
        \draw (2) -- (3) node[midway,below,sloped,font=\tiny] {$0000$};
        \draw (1) -- (2) node[midway,below,sloped,font=\tiny] {$0000$};
      \end{scope}

      \begin{scope}[xshift=8cm, yshift=2cm, every node/.style={draw,rectangle,inner sep=3pt, minimum size=0.3cm}]
        \node[anchor=east] (root) at (-0.75,0) {$\ast$};
        \node[anchor=west] (4) at (-0.75,-0.75) {$4$};
        \node[anchor=east] (3) at (4.west) {$3$};
        \node[anchor=west] (5) at (4.east) {$5$};
        \node[anchor=east] (2) at (3.west) {$2$};
        \node[anchor=east] (1) at (2.west) {$1$};
        \draw[decorate,decoration=brace] (1.north west) -- (5.north east) coordinate[midway] (litter1) {};
        \draw (litter1) + (0, 2.5pt) -- (root.south);   

        \node[anchor=west] (12) at (-3.5,-1.5) {$2$};
        \node[anchor=west] (14) at (12.east) {$4$};
        \node[anchor=west] (15) at (14.east) {$5$};
        \draw[decorate,decoration=brace] (12.north west) -- (15.north east) coordinate[midway] (litter2);
        \draw (litter2) + (0, 2.5pt) -- (1.south);

        \node[anchor=west] (23) at (-1.7,-1.5) {$3$};
        \node[anchor=west] (25) at (23.east) {$5$};
        \draw[decorate,decoration=brace] (23.north west) -- (25.north east) coordinate[midway] (litter4);
        \draw (litter4) + (0, 2.5pt) -- (2.south);

        \node[anchor=west] (34) at (-0.75,-1.5) {$4$};
        \node[anchor=west] (35) at (34.east) {$5$};
        \draw[decorate,decoration=brace] (34.north west) -- (35.north east) coordinate[midway] (litter6);
        \draw (litter6) + (0, 2.5pt) -- (3.south);

        \node[anchor=west] (45) at (0.5,-1.5) {$5$};
        \draw[decorate,decoration=brace] (45.north west) -- (45.north east) coordinate[midway] (litter7);
        \draw (litter7) + (0, 2.5pt) -- (4.south);

        \node [anchor=east] (145) at (-2.5, -2.25) {$5$};
        \draw[decorate,decoration=brace] (145.north west) -- (145.north east) coordinate[midway] (litter3);
        \draw (litter3) + (0, 2.5pt) -- (14.south);

        \node [anchor=east] (235) at (-1, -2.25) {$5$};
        \draw[decorate,decoration=brace] (235.north west) -- (235.north east) coordinate[midway] (litter8);
        \draw (litter8) + (0, 2.5pt) -- (23.south);

        \node [anchor=east] (125) at (-3.25, -2.25) {$5$};
        \draw[decorate,decoration=brace] (125.north west) -- (125.north east) coordinate[midway] (litter9);
        \draw (litter9) + (0, 2.5pt) -- (12.south);

      \end{scope}
    \end{scope}

    \begin{scope}[xshift=0cm, yshift=-6cm]
      \begin{scope}[xshift=0cm, yshift=0cm]
        \coordinate (1) at (1,0);
        \coordinate (3) at (2,2);
        \coordinate (4) at (0,2);
        \coordinate (5) at (1,1);
        \draw (5) -- (4) -- (3) -- cycle;
        \draw[twosimp] (1) -- (4) -- (5) -- cycle;
        \draw[twosimp] (1) -- (3) -- (5) -- cycle;
        
        \node () at (1.west) [anchor=east] {$1$};
        \node () at (3.east) [anchor=west] {$3$};
        \node () at (4.east) [anchor=east] {$4$};
        \node () at (5.north) [anchor=south] {$5$};
        
        \draw (3) -- (4) node[midway,above,sloped,font=\tiny] {$0010$};
        \draw (1) -- (5) node[near end,above,sloped,font=\tiny] {$0000$};
        \draw (4) -- (5) node[midway,above,sloped,font=\tiny] {$0000$};
        \draw (3) -- (5) node[midway,above,sloped,font=\tiny] {$0000$};
        \draw (1) -- (4) node[midway,below,sloped,font=\tiny] {$0000$};
        \draw (1) -- (3) node[midway,below,sloped,font=\tiny] {$0000$};
      \end{scope}

      \begin{scope}[xshift=8cm, yshift=2cm, every node/.style={draw,rectangle,inner sep=3pt, minimum size=0.3cm}]
        \node[anchor=east] (root) at (-0.75,0) {$\ast$};
        \node[anchor=west] (4) at (-0.75,-0.75) {$4$};
        \node[anchor=east] (3) at (4.west) {$3$};
        \node[anchor=west] (5) at (4.east) {$5$};
        \node[anchor=east] (1) at (3.west) {$1$};
        \draw[decorate,decoration=brace] (1.north west) -- (5.north east) coordinate[midway] (litter1) {};
        \draw (litter1) + (0, 2.5pt) -- (root.south);   

        \node[anchor=west] (13) at (-2.5,-1.5) {$3$};
        \node[anchor=west] (14) at (13.east) {$4$};
        \node[anchor=west] (15) at (14.east) {$5$};
        \draw[decorate,decoration=brace] (13.north west) -- (15.north east) coordinate[midway] (litter2);
        \draw (litter2) + (0, 2.5pt) -- (1.south);

        \node[anchor=west] (34) at (-1,-1.5) {$4$};
        \node[anchor=west] (35) at (34.east) {$5$};
        \draw[decorate,decoration=brace] (34.north west) -- (35.north east) coordinate[midway] (litter6);
        \draw (litter6) + (0, 2.5pt) -- (3.south);

        \node[anchor=west] (45) at (0.25,-1.5) {$5$};
        \draw[decorate,decoration=brace] (45.north west) -- (45.north east) coordinate[midway] (litter7);
        \draw (litter7) + (0, 2.5pt) -- (4.south);

        \node [anchor=east] (145) at (-1.5, -2.25) {$5$};
        \draw[decorate,decoration=brace] (145.north west) -- (145.north east) coordinate[midway] (litter3);
        \draw (litter3) + (0, 2.5pt) -- (14.south);

        \node [anchor=east] (135) at (-2.5, -2.25) {$5$};
        \draw[decorate,decoration=brace] (135.north west) -- (135.north east) coordinate[midway] (litter8);
        \draw (litter8) + (0, 2.5pt) -- (13.south);

      \end{scope}
    \end{scope}

  \end{tikzpicture}
  \caption{A somewhat simplified simplex tree representation of a
    simplicial complex. Annotation values on the $1$-simplices are
    included for a persistence module in which the simplices are added
    in the order
    $\dotsc,[1,5],[4,5],[2,5],[3,5],[1,4],[2,3],[1,4,5],[2,3,5],[3,4],[1,2]$,
    leading up to the top row situation. To contract the edge $[1,2]$,
    the link condition must be fulfilled, requiring the inclusion of
    $[1,2,5]$ (middle row). The situation after contraction is shown
    in the bottom row.} \label{fig:simplextree}
\end{figure}
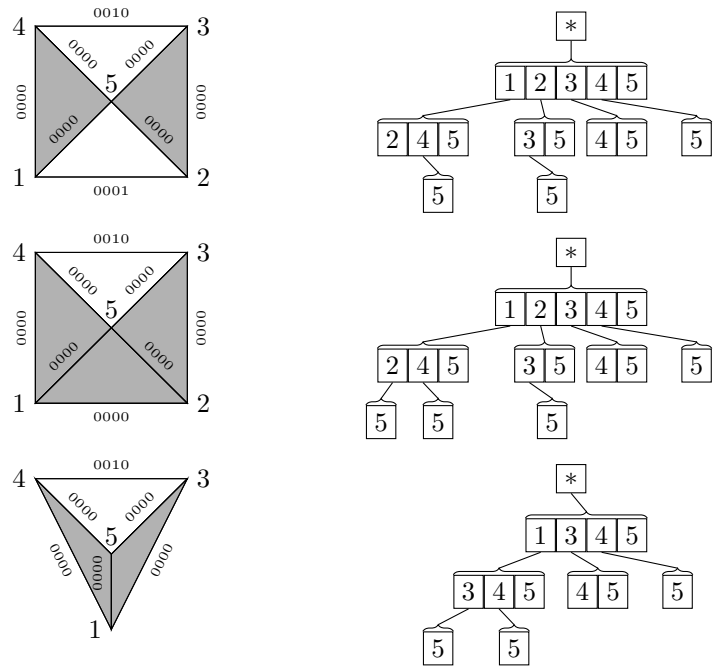

Boissonnat and Maria show that this data structure allows us to
efficiently insert and remove simplices, and compute their faces and
cofaces. For details, see~\cite{boissonnat:2012:simplextree}.

To tie the simplex tree to the annotations discussed earlier, we want
to associate to each node (\ie each simplex) its annotation
value. Since multiple simplices are likely to share the same
annotation value, we go by way of a union find structure. Each node
thus contains a pointer to a node in a forest, wherein each tree
represents an annotation value shared by multiple cohomologous
simplices. The root of each tree in the forest points to the actual
annotation value of the simplices pointing to nodes in that tree.

The annotation values themselves are also kept referenced in a
dictionary (keyed on the annotation values) for easy access and
updating as used in the algorithm outlined earlier.

\section{Persistent homology of sequences of
  covers} \label{sec:hocolim} 

In the following we assume that all covering sets are subsets of some
metric space and that every cover is finite. In particular, this means
that all our spaces are paracompact.  Moreover, the constructions in
this section can be seen as special cases of the much more general
construction of a homotopy colimit of a diagram of topological spaces.

To any open covering $\mathcal{U} = \{U_i \setsuchthat i\in I\}$ of an open set $U$ we 
assign a topological space $\Delta U_\mathcal{U}\subset |N\mathcal{U}|\times U$ defined 
as the disjoint union 
\begin{equation*}
  \bigsqcup_{S \in N\mathcal{U}}|S|\times \bigcap_{i\in S}U_i
\end{equation*} under the equivalence relation $(s, x) \sim (t,x)$ if $s\in |S|, t\in |T|, S\subseteq T$ and $s=t$. This construction comes equipped with continuous projection maps $\pi_1: \Delta U_\mathcal{U} \to |N\mathcal{U}|$ and  $\pi_2:\Delta U_\mathcal{U}\to U$ given by projecting onto the first and second factor, respectively. 
\lem{The fiber projecting map $\pi_2:\Delta U_\mathcal{U}\to U$ is a homotopy 
equivalence.} 
\begin{proof}[sketch.] As $U$ is assumed to be paracompact we can choose a
 partition of unity $\{\phi_i\}_{i\in I}$ subordinate to $\mathcal{U}$ and define 
 $g: U \to \Delta U_\mathcal{U}$ by 
 \begin{equation*}
   g(x) = \sum_{i\in I}\left(\phi_i(x)v_i, x\right), 
 \end{equation*}
where $v_i$ is the vertex corresponding to $U_i$. Then $\pi_2\circ g = \text{id}_U$ 
and it is not difficult to show that $g\circ \pi_2 \simeq \text{id}_{\Delta(\mathcal{U})}$. For a complete proof see~\cite{citeulike:1282830}.
\end{proof}

Now let $\mathcal{V} = \{V_j \setsuchthat j\in J\}$ be a finite cover of $V\supseteq U$ and $F: I\to J$ a map of covers. Recall that $|F|: |N\mathcal{U}|\to |N\mathcal{V}|$ denotes the continuous map defined on the vertices by the induced simplicial map between the nerves. If we let ${\rm inc}_U^V : U\hookrightarrow V$ denote the inclusion of $U$ into $V$ we get the commutative diagram
\begin{center}
\begin{tikzcd}
U \arrow{r}{\text{inc}_U^V}
&V\\
\Delta U_\mathcal{U}\arrow{r}{|F|\times {\rm inc}_U^V}\arrow{u}{\pi_2}[swap]{\simeq}\arrow{d}[swap]{\pi_1}
&\Delta V_\mathcal{V}\arrow{u}{\simeq}[swap]{\pi_2}\arrow{d}{\pi_1}\\
\, |N\mathcal{U}|\arrow{r}[swap]{|F|}
&\,|N\mathcal{V}|
\end{tikzcd}.
\end{center}
By passing to (singular) homology and using that $\pi_2$ is a homotopy equivalence we can reverse arrows to find
the following commutative diagram: 
\begin{equation}
\begin{tikzcd}
H_*(U) \arrow{r}{\left(\text{inc}_U^V\right)_*}\arrow{d}{}[swap]{(\pi_1)_*\circ (\pi_2)_*^{-1}}
&H_*(V)\arrow{d}{(\pi_1)_*\circ (\pi_2)_*^{-1}}\\
 H_*(|N\mathcal{U}|)\arrow{r}[swap]{|F|_*}
& H_*(|N\mathcal{V}|)
\end{tikzcd}
\label{eq:commutative}
\end{equation}

\begin{example} \label{ex:hocolim}
  Note that Diagram~\eqref{eq:commutative} does not commute on the level
  of spaces: let $\mathcal{U} = \{U\}$ and $\mathcal{V} = \{U, V\}$
  where $U\cap V\neq\emptyset$. If $x\in U\cap V$ then $(|F|\circ
  \pi_1\circ g)(x)$ is a point in $|N\mathcal{U}|$ whereas $(\pi_1\circ
  g \circ {\rm inc})(x)$ can be any point along the edge $|[U,
  V]|$, depending on the choice of partition of unity. See
  \figref{hocolim}.
\end{example}
\begin{figure}[htpb]
  \centering
  \quad\quad\quad\quad\begin{tikzpicture}[font=\small]
    \draw[right hook->] (0.7,0) -- (2.05, 0);
    \draw[right hook->] (0.7,-2) -- (1.5, -2);
    \draw[->] (0, -0.7) -- (0, -1.3);
    \draw[->] (3, -0.7) -- (3, -1.5);

    \begin{scope}[xshift=0cm, yshift=0cm]
      \draw[thick, postaction={white,fill}]{
        (0, 0) circle (0.5)
      };
      \coordinate (x) at (0.5,0);
      \filldraw[black] (x) circle (2pt);
      \node[anchor=south east] () at (-0.4, 0.4) {$U$};
    \end{scope}

    \begin{scope}[xshift=3cm, yshift=0cm]
      \draw[thick, postaction={white,fill}]{
        (-0.25, 0) circle (0.5)
        (0.25, 0) circle (0.5)
      };
      \coordinate (x) at (0.25,0);
      \filldraw[black] (x) circle (2pt);
      \node[anchor=south east] () at (-0.65, 0.4) {$U$};
      \node[anchor=south west] () at (0.65, 0.4) {$V$};
    \end{scope}

    \begin{scope}[xshift=0cm, yshift=-2cm]
      \draw[thick, postaction={white,fill}]{
        (0, 0) circle (0.5)
      };
      \coordinate (x) at (0.5,0);
      \filldraw[black] (x) circle (2pt);
      \node[anchor=north east] () at (-0.4, -0.4) {$\Delta U_{\{U\}}$};
    \end{scope}

    \begin{scope}[xshift=3cm, yshift=-2cm]
      \draw (-0.3, 0) arc (180:360:0.3 and 0.7);
      \draw (-0.6, 0) arc (180:360:0.6 and 1.0);

      \draw[thick, postaction={white,fill}]{
        (-0.8, 0) ellipse (0.5 and 0.3)
      };
      \draw[thick, postaction={white,fill}]{
        (0.8, 0) ellipse (0.5 and 0.3)
      };

      \draw[densely dotted] (-0.45, 0) ellipse (0.15 and 0.1);
      \draw[densely dotted] (0.45, 0) ellipse (0.15 and 0.1);

      \draw[densely dotted] (-0.3, 0) arc (180:360:0.3 and 0.7);
      \draw[densely dotted] (-0.6, 0) arc (180:360:0.6 and 1.0);

      \coordinate (x) at (-0.3,0);
      \filldraw[black] (x) circle (2pt);

      \coordinate (y) at (0,-0.85);
      \filldraw[black] (y) circle (2pt);
      \node[anchor=north west] () at (0.8, -0.4) {$\Delta (U\cup V)_{\{U,V\}}$};
    \end{scope}
  \end{tikzpicture}
  \caption{This diagram is an example of the diagram from \exref{hocolim} not commuting on the level of spaces.} \label{fig:hocolim}
\end{figure}
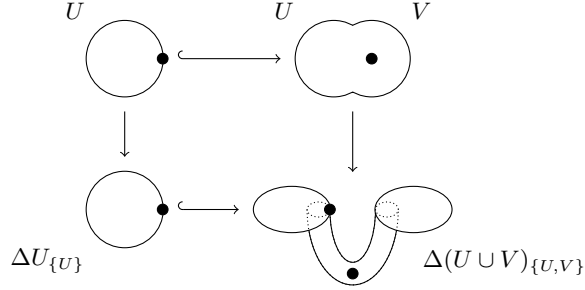

\begin{definition}
A cover is said to be \idf{good} if every finite intersection of its sets is contractible.
\end{definition}
\noindent The following theorem is one of the great pillars of computational 
algebraic topology. It allows us to use discrete information to capture the 
topology of a continuous space. For a proof see Section 4.G.\ of~\cite{citeulike:1282830}. 
\teo{If $\mathcal{U}$ is a good cover, then the base projection map $\pi_1: \Delta U_\mathcal{U} \to |N\mathcal{U}|$ is a homotopy equivalence.}
\cor{If $\mathcal{U}$ is a good cover, then the composition $(\pi_1)_*\circ (\pi_2)_*^{-1}$ is an isomorphism.
\label{cor:nerve}}

\subsection{A sandwich theorem for sequences of covers} \label{sec:sandwich}
We will use the results from the previous section to prove a sandwich type
theorem for sequences of covers. The idea is that if a sequence of covers can be 
sandwiched between two sequences of good covers, then the persistence module
associated to the middle sequence approximates the persistence modules
associated to the good covers. 

Let $\left(\mathcal{U}, F_\mathcal{U}\right), \left(\mathcal{V},F_{\mathcal{V}}\right)$ 
and $\left(\mathcal{W}, F_{\mathcal{W}}\right)$ be sequences of covers
satisfying
\begin{equation*}
U(\alpha)\subseteq V(\alpha)\subseteq W(\alpha) \subseteq U(c\alpha)
\end{equation*}
together with maps of covers
\begin{align*}
F_{\mathcal{V},\mathcal{W}}^{\alpha, \alpha^\prime}&: \mathcal{V}(\alpha) \to \mathcal{W}(\alpha^\prime) & 
F_{\mathcal{W},{\mathcal{V}}}^{\alpha, c\alpha^\prime}: \mathcal{W}(\alpha) \to \mathcal{V}(c\alpha^\prime)
\end{align*} 
for all $\alpha^\prime\geq\alpha$ and a fixed constant $c\geq 1$. Moreover, we assume that the maps
of covers satisfy the following coherence relations: 
\begin{align}
F_{\mathcal W}^{\alpha^\prime, \alpha^{\prime\prime}}\circ F_{{\mathcal V},{\mathcal W}}^{\alpha, \alpha^\prime} = 
F_{\mathcal V,\mathcal W}^{\alpha^\prime, \alpha^{\prime\prime}}\circ F_{\mathcal V}^{\alpha, \alpha^\prime}& ~ & F_{\mathcal W,\mathcal V}^{\alpha/c, c\alpha^\prime}
\circ F_{\mathcal V,\mathcal W}^{\alpha/c, \alpha/c} = F_{\mathcal V}^{\alpha/c, c\alpha^\prime}
\label{eq:coher}
\end{align}
for all $\alpha^{\prime\prime}\geq\alpha^\prime\geq\alpha$.

For notational simplicity we let $\eta_{\mathcal U,\mathcal V}^{\alpha,\alpha^\prime} = |F_{\mathcal U,\mathcal V}^{\alpha,\alpha^\prime}|_*$
and accordingly for the other maps of covers above. 
From Corollary \ref{cor:nerve} we know that if $\left(\mathcal{U}, F_{\mathcal{U}}\right)$ and $\left(\mathcal{W}, F_{\mathcal{W}}\right)$
are sequences of good covers, then there exist unique linear maps
$\eta_{\mathcal{U},{\mathcal{V}}}^{\alpha, \alpha^\prime}$, $\eta_{\mathcal{U},\mathcal{W}}^{\alpha,\alpha^\prime}$
and $\eta_{\mathcal{W},\mathcal{U}}^{\alpha, c\alpha^\prime}$ making the following diagrams
commute:
\begin{equation*}
\begin{tikzcd}
H_p(U(\alpha))\arrow{d}{(\pi_1)_*\circ(\pi_2)_*^{-1}}[swap]{\cong}\arrow{r}{\left({\rm inc}_{U(\alpha)}^{V(\alpha^\prime)}\right)_*} 
& H_p(V(\alpha^\prime))\arrow{d}[swap]{(\pi_1)_*\circ(\pi_2)_*^{-1}} & H_p(U(\alpha))\arrow{d}{(\pi_1)_*\circ(\pi_2)_*^{-1}}[swap]{\cong}\arrow{r}{\left({\rm inc}_{U(\alpha)}^{W(\alpha^\prime)}\right)_*} & H_p(W(\alpha^\prime))\arrow{d}{\cong}[swap]{(\pi_1)_*\circ(\pi_2)_*^{-1}} \\
H_p(|N\mathcal{U}(\alpha)|)\arrow{r}[swap]{\eta_{\mathcal{U},{\mathcal{V}}}^{\alpha, \alpha^\prime}} & H_p(|N\mathcal{V}(\alpha^\prime)|)
& H_p(|N\mathcal{U}(\alpha)|)\arrow{r}[swap]{\eta_{\mathcal{U},\mathcal{W}}^{\alpha,\alpha^\prime}} & H_p(|N\mathcal{W}(\alpha^\prime)|)
\end{tikzcd}
\end{equation*}
\begin{equation*}
\begin{tikzcd}
~&H_p(W(\alpha))\arrow{d}{(\pi_1)_*\circ(\pi_2)_*^{-1}}[swap]{\cong}\arrow{r}{\left({\rm inc}_{W(\alpha)}^{U(c\alpha^\prime)}\right)_*} & H_p(U(c\alpha^\prime))\arrow{d}{\cong}[swap]{(\pi_1)_*\circ(\pi_2)_*^{-1}} \\
& H_p(|N\mathcal{W}(\alpha)|)\arrow{r}[swap]{\eta_{\mathcal{W},\mathcal{U}}^{\alpha, c\alpha^\prime}} & H_p(|N\mathcal{U}(c\alpha^\prime)|)
\end{tikzcd}
\end{equation*}
\noindent Hence, there are well-defined linear maps 
\begin{align}
\phi_\alpha = \eta_{\mathcal{U},\mathcal{V}}^{\alpha, c\alpha}&: H_p(|N\mathcal{U}(\alpha)|)
\to H_p(|N\mathcal{V}(c\alpha)|) \notag
\\
\psi_\alpha = \eta_{\mathcal{W},\mathcal{U}}^{\alpha,c\alpha}\circ\eta_{\mathcal{V},\mathcal{W}}^{\alpha,\alpha}&: 
 H_p(|N\mathcal{V}(\alpha)|) \to H_p(|N\mathcal{U}(c\alpha)|)
 \label{eq:int2}
\end{align}
\noindent Also, note that the map $\eta_{\mathcal{W},\mathcal{V}}^{\alpha,c\alpha^\prime}$ is the unique map that
makes Diagram \eqref{eq:commutative} commute. 
\teo{If $\left(\mathcal{U}, F_{\mathcal{U}}\right)$ and $\left(\mathcal{W}, F_{\mathcal{W}}\right)$
are sequences of good covers, then the families of homomorphisms $\{\phi_\alpha\}_{\alpha\in [0, \infty)}$ and $\{\psi_\alpha\}_{\alpha\in[0, \infty)}$ 
defined in Equation \eqref{eq:int2} satisfy the diagrams of Definition 3.
 In particular, the persistence modules 
 \begin{equation*}
   \left(H_p(|N\mathcal{U}|), \eta_{\mathcal{U}}\right)  \text{ and } \left(H_p(|N\mathcal{V}|), \eta_{\mathcal{V}}\right)
 \end{equation*}
 are $c$-approximate.
\label{teo:sandwich}}

\begin{proof}
We need to show that the following four relations in Definition~\ref{def:capprox} are satisfied for all $\alpha\leq \alpha^\prime$:
\begin{align}
  &\psi_{\alpha^\prime}\circ \eta_{\mathcal{V}}^{\alpha, \alpha^\prime}\circ \phi_{\alpha/c} = \eta_{\mathcal{U}}^{\alpha/c, c\alpha^\prime}&  
  &\psi_{\alpha^\prime}\circ\eta_{\mathcal{V}}^{\alpha, \alpha^\prime} = \eta_{\mathcal{U}}^{c\alpha, c\alpha^\prime}\circ\psi_\alpha \nonumber \\
  &\phi_{\alpha^\prime}\circ\eta_{\mathcal{U}}^{\alpha, \alpha^\prime}\circ \psi_{\alpha/c} = \eta_{\mathcal{V}}^{\alpha/c, c\alpha^\prime}&
  &\phi_{\alpha^\prime}\circ\eta_{\mathcal{U}}^{\alpha, \alpha^\prime} = \eta_{\mathcal{V}}^{c\alpha, c\alpha^\prime}\circ\phi_\alpha
\label{eq:fourrel}
\end{align}
It follows from the uniqueness of the above linear maps, and the 
associativity of the maps in a sequence of covers, that any map composed out of the maps
\begin{equation}
\eta_\mathcal{U}^{-,-}, \eta_{\mathcal{V}}^{-,-}, \eta_\mathcal{W}^{-,-}, \eta_{\mathcal U, \mathcal W}^{-,-}, \eta_{\mathcal W, \mathcal U}^{-, -}
, \eta_{\mathcal W, \mathcal V}^{-, -} \hspace{0.2cm}{\rm and}\hspace{0.2cm} \eta_{\mathcal U, \mathcal V}^{-,-}\label{eq:specmap}\end{equation}
is uniquely defined by its domain and co-domain. That, together with the
coherence relations of Equation \eqref{eq:coher}, will prove the theorem. We will do the top left case of Equation~\eqref{eq:fourrel} in full detail
whereas we will refer to uniqueness arguments in the other three cases. 
\begin{description}
\item[Top left:]
\begin{align*}
& \psi_{\alpha^\prime}\circ \eta_{\mathcal{V}}^{\alpha, \alpha^\prime}\circ \phi_{\alpha/c} \\
&= \eta_{\mathcal{W},\mathcal{U}}^{\alpha^\prime,c\alpha^\prime}\circ\eta_{\mathcal{V},\mathcal{W}}^{\alpha^\prime,\alpha^\prime}\circ
\eta_\mathcal{V}^{\alpha, \alpha^\prime}\circ \eta_{\mathcal{U}, \mathcal{V}}^{\alpha/c, \alpha} \\
&= \eta_{\mathcal{W}, \mathcal{U}}^{\alpha^\prime, c\alpha^\prime}\circ \eta_{\mathcal{V}, \mathcal{W}}^{\alpha, \alpha^\prime}\circ \eta_{\mathcal{U}, \mathcal{V}}^{\alpha/c, \alpha} \hspace{1cm} \\ 
&= \eta_{\mathcal{W}, \mathcal{U}}^{\alpha^\prime, c\alpha^\prime}\circ \eta_{\mathcal{V}, \mathcal{W}}^{\alpha, \alpha^\prime}\circ (\pi_1\circ\pi_2^{-1})_*\circ \left({\rm inc}_{U(\alpha/c)}^{V(\alpha)}\right)_*\circ (\pi_1\circ\pi_2^{-1})_*^{-1}\\
&= \eta_{\mathcal{W}, \mathcal{U}}^{\alpha^\prime, c\alpha^\prime}\circ (\pi_1\circ\pi_2^{-1})_*\circ \left({\rm inc}_{V(\alpha)}^{W(\alpha^\prime)}\right)_* \left({\rm inc}_{U(\alpha/c)}^{V(\alpha)}\right)_*\circ (\pi_1\circ\pi_2^{-1})_*^{-1}\\
&= (\pi_1\circ\pi_2^{-1})_*\circ \left({\rm inc}_{U(\alpha/c)}^{U(c\alpha^\prime)}\right)_*\circ (\pi_1\circ \pi_2^{-1})_*^{-1} \\
& =\eta_{\mathcal{U}}^{\alpha/c, c\alpha^\prime}
\end{align*}
The second equality follows from the coherence relations. 
\item[Bottom left:] By definition, $\phi_{\alpha^\prime}\circ \eta_{\mathcal{U}}^{\alpha, \alpha^\prime}\circ \psi_{\alpha/c} 
= \eta_{\mathcal{U}, \mathcal{V}}^{\alpha^\prime, c\alpha^\prime}\circ \eta_{\mathcal{U}}^{\alpha, \alpha^\prime}\circ 
\eta_{\mathcal{W},\mathcal{U}}^{\alpha/c, \alpha}\circ\eta_{\mathcal{V},\mathcal{W}}^{\alpha/c, \alpha/c}$. Using that the
composition of the three leftmost maps has same domain and co-domain as $\eta_{\mathcal{W},\mathcal{V}}^{\alpha/c, c\alpha^\prime}$
, we are left with $\eta_{\mathcal{W}, \mathcal{V}}^{\alpha/c,c\alpha^\prime}\circ \eta_{\mathcal{V},\mathcal{W}}^{\alpha/c,\alpha/c}=
\eta_{\mathcal{V}}^{\alpha/c, c\alpha^\prime}$. Here the last equality
follows from Equation \eqref{eq:coher}. 
\item[Top right:] From the coherence relations in Equation \eqref{eq:coher} we find $\psi_{\alpha^\prime}\circ\eta_{\mathcal{V}}^{\alpha, \alpha^\prime} 
= \eta_{\mathcal{W},\mathcal{U}}^{\alpha^\prime, c\alpha^\prime}\circ\eta_{\mathcal{V},\mathcal{W}}^{\alpha^\prime,\alpha^\prime}\circ \eta_{\mathcal{V}}^{\alpha, \alpha^\prime}
= \eta_{\mathcal{W},\mathcal{U}}^{\alpha^\prime, c\alpha^\prime}\circ \eta_{\mathcal{W}}^{\alpha, \alpha^\prime}\circ \eta_{\mathcal{V},\mathcal{W}}^{\alpha, \alpha}$.
Lastly, we note that 
$\eta_{\mathcal{W},\mathcal{U}}^{\alpha^\prime,c\alpha^\prime}\circ \eta_{\mathcal{W}}^{\alpha, \alpha^\prime}$ 
and $\eta_{\mathcal{U}}^{c\alpha, c\alpha^\prime}\circ \eta_{\mathcal{W},\mathcal{U}}^{\alpha, c\alpha}$ 
are equal by uniqueness. 
\item[Bottom right:] Both sides of the equation are composed out of maps from \eqref{eq:specmap}. 
\end{description}
\end{proof}
The following is a corollary of the proof.
\begin{corollary}
Any two of the persistence modules
\begin{equation*}
 \left(H_p(|N\mathcal{U}|), 
 \eta_{\mathcal{U}}\right), \left(H_p(|N\mathcal{V}|), \eta_{\mathcal{V}}\right) \text{ and }
\left(H_p(|N\mathcal{W}|), \eta_{\mathcal{W}}\right)
\end{equation*}
 are $c$-approximate. 
\end{corollary}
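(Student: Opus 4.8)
The plan is to observe that the corollary follows entirely from the symmetric structure already established in the proof of Theorem~\ref{teo:sandwich}, so that essentially no new work is needed. Recall that the sandwich hypothesis $U(\alpha)\subseteq V(\alpha)\subseteq W(\alpha)\subseteq U(c\alpha)$ is cyclic in a weak sense: it immediately gives $V(\alpha)\subseteq W(\alpha)\subseteq U(c\alpha)\subseteq V(c\alpha)$ (the last inclusion being the original one at scale $c\alpha$), and likewise $W(\alpha)\subseteq U(c\alpha)\subseteq V(c\alpha)\subseteq W(c\alpha)$. Thus the triple $(\mathcal{W},\mathcal{U},\mathcal{V})$ satisfies the sandwich hypothesis with $W$ in the ``good outer'' positions and $U$ in the ``middle'' position, and the triple $(\mathcal{U},\mathcal{W},\ldots)$ — after reindexing — lets us compare $\mathbb{U}$ and $\mathbb{W}$ directly as well.

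First I would make precise that the key structural fact proved inside Theorem~\ref{teo:sandwich} is the \emph{uniqueness principle}: any homomorphism between homology groups of nerves that is assembled as a composite of the maps listed in~\eqref{eq:specmap} depends only on its domain and codomain. This principle is symmetric in $\mathcal{U}$, $\mathcal{V}$, $\mathcal{W}$ — it rests only on Corollary~\ref{cor:nerve} applied to the good covers $\mathcal{U}$ and $\mathcal{W}$, together with the coherence relations~\eqref{eq:coher}, and it makes no distinction about which of the three plays the ``middle'' role. Since the bottom-right case of~\eqref{eq:fourrel} was already dispatched purely by this principle, the same argument verbatim verifies the analogous four relations for any of the three pairs.

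Next I would spell out the two remaining comparisons. For $\left(H_p(|N\mathcal{W}|),\eta_{\mathcal{W}}\right)$ versus $\left(H_p(|N\mathcal{V}|),\eta_{\mathcal{V}}\right)$: since $\mathcal{W}$ is a sequence of good covers sitting between $\mathcal{V}$ at scale $\alpha$ and $\mathcal{W}$ — no, rather, we use that $\mathcal{V}(\alpha)\subseteq\mathcal{W}(\alpha)\subseteq\mathcal{U}(c\alpha)\subseteq\mathcal{V}(c\alpha)$, which exhibits $\mathcal{V}$ as the middle cover between the good covers $\mathcal{W}$ (playing the role of the enlargement by $c$, via composing with a map into $\mathcal{U}$) and itself at scale $c$; one defines the interleaving maps $H_p(|N\mathcal{W}(\alpha)|)\to H_p(|N\mathcal{V}(c\alpha)|)$ and $H_p(|N\mathcal{V}(\alpha)|)\to H_p(|N\mathcal{W}(c\alpha)|)$ as the evident composites of $\eta$-maps from~\eqref{eq:specmap} and invokes the uniqueness principle plus~\eqref{eq:coher} exactly as in the four displayed cases of the theorem's proof. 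For $\left(H_p(|N\mathcal{U}|),\eta_{\mathcal{U}}\right)$ versus $\left(H_p(|N\mathcal{W}|),\eta_{\mathcal{W}}\right)$, both covers are good, so Corollary~\ref{cor:nerve} gives an honest isomorphism with $H_p(U(\alpha))$ and $H_p(W(\alpha))$ on the level of spaces, and the inclusions $U(\alpha)\subseteq W(\alpha)\subseteq U(c\alpha)$ give a genuine $c$-interleaving of the two space-level persistence modules, transported through these isomorphisms; this is in fact the cleanest of the three.

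The main (and only real) obstacle is bookkeeping: one must check that the requisite maps of covers and coherence relations are actually available for the permuted triples. The inclusions at the level of spaces are automatic from the sandwich hypothesis, but the maps $F_{\mathcal{V},\mathcal{W}}$, $F_{\mathcal{W},\mathcal{V}}$ are given as part of the data — for the pair $(\mathcal{W},\mathcal{V})$ one needs a map of covers $\mathcal{W}(\alpha)\to\mathcal{V}(c\alpha)$ and one from $\mathcal{V}(\alpha)\to\mathcal{W}(\alpha)$, and these are already in hand as $F_{\mathcal{W},\mathcal{V}}^{\alpha,c\alpha}$ and $F_{\mathcal{V},\mathcal{W}}^{\alpha,\alpha}$ respectively, with the coherence relations~\eqref{eq:coher} and the sequence-of-covers axioms supplying what is needed. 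I would therefore present the proof as: (i) state the uniqueness principle explicitly as the engine; (ii) observe it is symmetric in the three covers; (iii) note that the permuted triples satisfy the hypotheses of Theorem~\ref{teo:sandwich} using the already-given maps and inclusions; (iv) conclude that the same four-diagram verification applies, giving $c$-approximation for each pair.
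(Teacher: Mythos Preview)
Your core idea---the uniqueness principle from the proof of Theorem~\ref{teo:sandwich}, namely that any composite of maps from the list~\eqref{eq:specmap} is determined by its domain and codomain---is exactly what the paper intends, and it suffices on its own. For each of the three pairs one writes down the obvious interleaving maps as composites from~\eqref{eq:specmap} (e.g.\ $\eta_{\mathcal{V},\mathcal{W}}^{\alpha,c\alpha}$ and $\eta_{\mathcal{W},\mathcal{V}}^{\alpha,c\alpha}$ for the pair $(\mathcal{V},\mathcal{W})$, and $\eta_{\mathcal{U},\mathcal{W}}^{\alpha,c\alpha}$, $\eta_{\mathcal{W},\mathcal{U}}^{\alpha,c\alpha}$ for the pair $(\mathcal{U},\mathcal{W})$), and the four relations of Definition~\ref{def:capprox} follow from uniqueness together with the coherence relations~\eqref{eq:coher}, just as in the displayed cases of the theorem. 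That is the whole argument; the paper simply states ``corollary of the proof''.

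Where you should prune: the framing in your first and last paragraphs about \emph{permuting the triple and re-applying Theorem~\ref{teo:sandwich}} does not work as stated. The theorem requires both outer covers to be good, and only $\mathcal{U}$ and $\mathcal{W}$ are assumed good---so any permutation placing $\mathcal{V}$ in an outer slot fails the hypotheses. Your own hesitation in the third paragraph (``no, rather, we use\dots'') reflects this. You do not need step~(iii) of your plan at all: steps (i), (ii), (iv) already constitute a complete proof, and they are precisely the paper's approach. Drop the permutation narrative and lead directly with the uniqueness principle.
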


Note that we do not require the covers in the sequence
$(\mathcal{V},F_{\mathcal{V}})$ to be good. One application of this,
which will be pursued in the next section, is the following. Let $(\mathcal{U}, F_{\mathcal{U}})$
be a sequence of good covers and $(\mathcal{V}, F_\mathcal{V})$ another sequence of covers
where each open set in $\mathcal{V}(\alpha)$ is a union of open sets in $\mathcal{U}(\alpha)$. Thus, we have a 
map of covers $\mathcal{U}(\alpha) \to \mathcal{V}(\alpha^\prime)$, and more interestingly, a linear map
$H_p(|N\mathcal{U}(\alpha)|)\to H_p(|N\mathcal{V}(\alpha^\prime)|)$. However, we do not have a map of covers the other way around, 
so it is a priori not clear how to define the interleaving map in the opposite direction. This is
illustrated in Figure \ref{fig:nonexist}.
The previous theorem tells us that such a map can be constructed and gives an upper bound on the
bottleneck distance between the associated persistence modules.

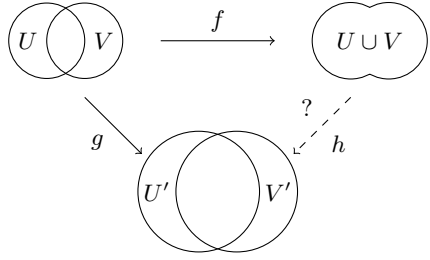
\begin{figure}[htpb]
  \centering
  \begin{tikzpicture}[font=\small]
    \begin{scope}[xshift=0cm, yshift=0cm]
      \draw (-0.25,0) circle (0.5);
      \draw (0.25,0) circle (0.5);
      \node () at (-0.5,0) {$U$};
      \node () at (0.5,0) {$V$};
    \end{scope}
    \begin{scope}[xshift=4cm, yshift=0cm]
      \draw[thick, postaction={white,fill}]{
        (-0.25, 0) circle (0.5)
        (0.25, 0) circle (0.5)
      };
      \node () at (0,0) {$U\cup V$};
    \end{scope}
    \begin{scope}[xshift=2cm, yshift=-2cm]
      \draw (-0.25,0) circle (0.8);
      \draw (0.25,0) circle (0.8);
      \node () at (-0.8,0) {$U'$};
      \node () at (0.8,0) {$V'$};
    \end{scope}
    \draw[->] (1.25,0) -- (2.75,0) node[auto,midway] {$f$};
    \draw[->] (0.25,-0.75) -- (1, -1.5) node[auto,midway,swap] {$g$};
    \draw[dashed, ->] (3.75,-0.75) -- (3, -1.5) node[auto,midway] {$h$} node[auto,midway,swap] {?};
  \end{tikzpicture}
  \caption{The map of covers $f$ is defined as sending a ball to the
    union it belongs to, and $g$ as the obvious map of covers arising
    from $U\subseteq U'$ and $V\subseteq V'$. There is no map of covers $h$ making
    the diagram commute on the level of covers. } \label{fig:nonexist}
\end{figure}

\section{Approximating the \Cech{} complex in Euclidean space} \label{sec:cechrips}
In this section we construct two different approximation schemes for the 
\Cech{} persistence module built on a finite set of points in Euclidean space. 

It is clear that for any $c$-approximation of the \Cech{} persistence
module, a $\sqrt{2}c$-approximation can be had via the Vietoris--Rips
complex built on its $1$-skeleton. For a treatment of approximate
Vietoris--Rips complexes in general metric spaces see~\cite{dey:2012:simplicialmaps, Sheehy13}.

\subsection{Linear-size approximation of the \Cech{} persistence module} \label{sec:linsize}
This section is an adaption of the work in~\cite{dey:2012:simplicialmaps} to the case of \Cech{}
complexes in Euclidean space. Throughout this section, $P\subseteq\mathbb{R}^n$.
\df{For a set of points $P$, we say that $P^\prime\subseteq P$ is a \idf{$\delta$-net} of $P$ if 
\begin{enumerate}
\item for every $p\in P$ there exists a $p^\prime \in P'$ such that $||p-p^\prime|| \leq \delta$
\item for any $p,q\in P'$, $||p-q||>\delta$.
\end{enumerate}
}
Choose parameters $\alpha_0,\epsilon\geq 0$ and define a
sequence of point sets $P_k$ for $k=0,1, \dotsc, m$ such that $P_0 =
P$ and $P_{k+1}$ is an $\alpha_0\epsilon^2(1+\epsilon)^{k-1}$-net of
$P_k$. We refer to such a collection $P_0,\dotsc,P_m$ as a
\idf{net-tree}. Furthermore, let $\mathcal{C}(P_k;\alpha)$ be the
\Cech{} complex at scale $\alpha$ built upon the vertex set $P_k$, and
$U(P_k; \alpha)$ the union of open balls of radius $\alpha$ centered
at each point in $P_k$. We clearly have maps $\pi_k: P_k\to P_{k+1}$
which send a vertex $p\in P_k$ to its most nearby vertex in $P_{k+1}$.

\lem{For every $k=0, \ldots, m-1$ we have inclusions 
  \begin{equation*}
    U(P; \alpha_0(1+\epsilon)^k) \subseteq U(P_{k+1}; \alpha_0(1+\epsilon)^{k+1}).  
  \end{equation*}\label{lem:nettree-inclusion}}
\begin{proof}
Let $p\in P=P_0$ and $x\in\mathbb{R}^n$ be any point such that $||p -x||<\alpha_0(1+\epsilon)^k$. Since $P_{1}$ is an $\alpha_0\epsilon^2(1+\epsilon)^{-1}$-net of $P$
we can find $\pi_0(p)\in P_1$ such that $||\pi_0(p) - p|| \leq \alpha_0\epsilon^2(1+\epsilon)^{-1}$. Similarly, we can find $p^\prime = (\pi_k\circ\cdots\circ\pi_0) (p)\in P_{k+1}$ such that
\begin{align*}
||p^\prime - x|| &=  ||\pi_k\circ\cdots\circ\pi_0(p) - x||\\
&\leq ||p - x || + \sum_{i=0}^k \alpha_0\epsilon^2(1+\epsilon)^{i-1}\\
&\leq ||p-x|| + \frac{\alpha_0\epsilon^2}{1+\epsilon}\cdot \frac{(1+\epsilon)^{k+1}-1}{\epsilon}\\
&< \alpha_0(1+\epsilon)^k + \alpha_0\epsilon(1+\epsilon)^{k} = \alpha_0(1+\epsilon)^{k+1}.
\end{align*}
\end{proof}
\noindent 
In particular, for $p\in P_k$ we have that $B(p; \alpha_0(1+\epsilon)^k) 
\subseteq B(\pi_k(p); \alpha_0(1+\epsilon)^{k+1})$, and thus 
$\pi_k: P_k \to P_{k+1}$ is a map
of covers 
\begin{equation*}
\pi_k: \mathcal{U}(P_k; \alpha_0(1+\epsilon)^k) \to \mathcal{U}(P_{k+1}; 
\alpha_0(1+\epsilon)^{k+1}).
\end{equation*}
 Using this we define a sequence of covers
associated to the net tree by defining $$\mathcal{U}^{\rm net}(P;\alpha) 
= \mathcal{U}(P_k; \alpha_0(1+\epsilon)^k)$$ where $k$ is the greatest 
integer such that $\alpha_0(1+\epsilon)^k \leq \alpha$. The maps between 
the covers are given by compositions of $\pi_k$'s. We will denote the induced
sequence of nerves by $\mathcal{C}^{\rm net}(P)$ and the associated persistence module
by $(H_p(\mathcal{C}^{\rm net}(P)), \pi_*)$. Recall that with this notation we have
that $$U^{\rm net}(P; \alpha) = U(P_k; \alpha_0(1+\epsilon)^k) = \bigcup_{p\in P_k} B(p; \alpha_0(1+\epsilon)^k)$$

\prop{The persistence modules $(H_p(\mathcal{C}^{\rm net}(P)), \pi_*)$ and $(H_p(\mathcal{C}(P)), {\rm id}_*)$ are  $(1+\epsilon)^2$-approximate.}

\begin{proof}
Using that $U^{\rm net}(P; \alpha) = U(P_k; \alpha_0(1+\epsilon)^k)$ together with Lemma \ref{lem:nettree-inclusion} we have the chain of inclusions
\begin{align*}
  U^{\rm net}(P;\alpha) \subseteq U(P; \alpha) \subseteq U(P; \alpha_0(1+\epsilon)^{k+1}) &\subseteq U(P_{k+2}; \alpha_0(1+\epsilon)^{k+2}) \\
  &= U^{\rm net}(P;\alpha(1+\epsilon)^2).
\end{align*}
The rest of the proof follows by applying Theorem \ref{teo:sandwich} 
with $\mathcal{U} = \mathcal{U}^{\rm net}(P)$ and $\mathcal{V} = \mathcal{W} 
= \mathcal{U}(P)$. 

\end{proof}

\prop{Let $P\subseteq\mathbb{R}^n$ be a set of $m$ points. Then the number 
of $p$-simplices in $\mathcal{C}^{\rm net}(P;\alpha_0(1+\epsilon)^k)$ is 
$\mathcal{O}\left((\frac{1}{\epsilon})^{\mathcal{O}(np)}m\right)$.}
\begin{proof}
  This is Theorem 6.3 in \cite{dey:2012:simplicialmaps} together with
  the fact that the doubling dimension of $\mathbb{R}^n$ is
  $\mathcal{O}(n)$
\end{proof}

The net-tree construction exhibits great theoretical properties both with regards
to approximating the \Cech{} persistence module and in terms
of size complexity. In practice however, as we shall see in Section
\ref{sec:experiments}, the complex often grows too large to be
stored. Not doing a single collapse between scale
$\alpha_0(1+\epsilon)^k$ and scale $\alpha_0(1+\epsilon)^{k+1}$ will
in many situations introduce too many new simplices. To mend this we
introduce a complex which allows for more numerous collapses, at the
expense of computation time and poorer error bounds.
\label{sec:nettree}

\subsection{Approximations through non-good covers}
We propose a general framework to approximate persistence modules associated to
sequences of good covers. Using this framework we give an explicit approximation
of the \Cech{} persistence module in Euclidean space. 

Let $(\mathcal{U},F)$ be a sequence of covers with index sets $\{I(\alpha)\}_{\alpha\geq 0}$ and
$J(I(\alpha))$ a partition of $I(\alpha)$. We make the following assumption on the 
partitions: if $J\in J(I(\alpha))$ then for all $\alpha^\prime\geq\alpha$ there exists
$J^\prime\in J(I(\alpha^\prime))$ such that $J\subseteq J^\prime$. In other words, if two
elements are partitioned together at some scale $\alpha$, they will be partitioned together
at all scales $\alpha^\prime\geq\alpha$. Moreover, if $J\in J(I(\alpha))$
then $F^{\alpha,\alpha^\prime}(J)$ denotes the set $J^\prime\in J(I(\alpha^\prime))$ such that
$J\subseteq J^\prime$. 

\lem{For each $\alpha\geq 0$, let $J(I(\alpha))$ be a partition of
 $I(\alpha)$ as described above.
 Then the pair $(\widetilde{\mathcal{U}}, F)$ with
 \begin{equation*}
   \widetilde {\mathcal U}(\alpha) = \left\{\widetilde{U}_J(\alpha) = \bigcup_{j \in J} U_j(\alpha) \setsuchthat J\in J(I(\alpha))\right\}
 \end{equation*}
 is a sequence of covers.}
\begin{proof}
This follows from that $J\subseteq F^{\alpha,\alpha^\prime}(J)$ for all $J\in J(I(\alpha))$.  
\end{proof}
For such a choice of partitions we say that $(\widetilde{\mathcal{U}}, F)$ is \idf{coarsening} of $(\mathcal{U},F)$. 

Let $(\widetilde{\mathcal{U}}(P), {\rm id})$ be any coarsening of the \Cech{} sequence of covers $\mathcal{U}(P)$ on a finite point set $P\subseteq\mathbb{R}^n$. Furthermore, define an associated sequence of good covers $(\Conv\widetilde{\mathcal{U}}, {\rm id})$ where 
\begin{equation*}
  \Conv{\widetilde{\mathcal{U}}}(\alpha) = \left\{\Conv\left(\widetilde{U}_k(\alpha)\right) \setsuchthat \widetilde{U}_k(\alpha)\in \widetilde {\mathcal U}(\alpha) \right\},
\end{equation*}
and $\Conv(-)$ denotes the convex hull. In the following proposition
 $(\widetilde{\mathcal{C}}(P),\id)$ denotes the induced sequence of nerves of 
 $(\widetilde{\mathcal{U}}(P),\id)$. 

\prop{If there exists a constant $c\geq 1$ such that $\Conv\left(\widetilde{U}_J
(\alpha)\right) \subseteq \bigcup_{j\in J} U_j(c\alpha)$ for all $\alpha\geq 0$ and 
all $J\in J(\alpha)$, then the persistence modules $\left(H_p(\mathcal{C}(P)), 
{\rm id}_*\right)$ and \par \noindent $\left(H_p(\widetilde{\mathcal{C}}(P)), {\rm id}_*\right)$ are 
$c$-approximate.
\label{prop:approx}}
\begin{proof}
We will use Theorem \ref{teo:sandwich}. We see that the inclusion condition is satisfied
by assumption: 
\begin{equation*}
U(\alpha) \subseteq U(\alpha) \subseteq \bigcup_{J\in J(\alpha)}\Conv\left(\widetilde{U}_J
(\alpha)\right) \subseteq U(c\alpha).
\end{equation*}
Moreover, $\widetilde{\mathcal{U}}(P; \alpha)$ and $\Conv\widetilde{\mathcal{U}}
(P;\alpha)$ have the same indexing set, so the coherence relations of Equation 
\eqref{eq:coher} are trivially satisfied.
\label{prop:coarsening}
\end{proof}
We see that every time we make our cover coarser, the number of $0$-simplices in the nerve
is reduced, and hence so is the size of the simplicial complex. 
\label{sec:nongood}
\subsubsection{An explicit approximation} \label{sec:explicitapprox}
In the previous section we provided a general framework for constructing $c$-approximations
to the \Cech{} persistence module. We now give an explicit construction using 
Proposition~\ref{prop:approx}. 
\lem{Let $P=\{p_0,p_1, \ldots, p_k\}\subset\mathbb{R}^n$ where $p_0 = 0$ and $||p_i|| \leq \alpha$ for all $i$. 
Then for any point $x\in\Conv(P)$ there exists $p_i\in P$ such that $||x-p_i||\leq \alpha/\sqrt{2}$.
\label{lem:consphere}}
\begin{proof}
By definition of $p_0$ we may assume without loss of generality that $x = (x_1, 0, \ldots, 0)$ where
$x_1 > \alpha/\sqrt{2}$. Let $p_i = (p_{i,1}, p_{i,2}, \ldots p_{i,n})$
be a point in $P$ such that $p_{i,1} \geq p_{j,1}$ for every other $j$, and assume that $||x-p_i|| > \alpha/\sqrt{2}$. Using the law of cosines:
\begin{align*}
\alpha^2 \geq ||p_i||^2 &= ||(p_i-x) + x||^2 = ||p_i-x||^2 + ||x||^2 - 2||p_i-x||\cdot||x||\cos(\angle p_0xp_i)\\
&> \frac{\alpha^2}{2} + \frac{\alpha^2}{2} - 2||p_i-x||\cdot ||x||\cos(\angle p_0xp_i)\\
&= \alpha^2 - 2||p_i-x||\cdot ||x||\cos(\angle p_0xp_i)
\end{align*} 
implying that $\cos(\angle p_0xp_i)>0$. By application of the Euclidean inner product we find
\begin{equation*}
  (p_i -x)\cdot (-x) = -p_{i,1}\cdot x_1 + x_1^2 = ||p_i-x||\cdot ||x||\cdot\cos(\angle p_0xp_i) > 0
\end{equation*}
and therefore $p_{i,1} < x_1$, contradicting that $x$ was enclosed in the convex hull of $P$. 

\end{proof}
Figure~\ref{fig:consphereextreme} shows an extreme case of the previous Lemma.
\begin{figure}[htpb]
  \centering
  \begin{tikzpicture}[font=\small]
    \coordinate (p0) at (0,0);
    \coordinate (p1) at (225:2cm);
    \coordinate (p2) at (315:2cm);
    \draw (p0) -- (p1) -- (p2) -- cycle;
    \draw plot[mark=*] coordinates{(p0)} node[above] {$p_0$};
    \draw plot[mark=*] coordinates{(p1)} node[left] {$p_1$};;
    \draw plot[mark=*] coordinates{(p2)} node[right] {$p_2$};;
    \draw plot[mark=x] coordinates{(p0 |- p1)} node [below] {$x$};
  \end{tikzpicture}
  \caption{The vertices $p_0, p_1, p_2$ of an isosceles triangle $T=\Conv\{p_0,p_1,p_2\}$ with legs of length $\alpha$ and base of length $\sqrt{2}\alpha$ form an extreme case of Lemma~\ref{lem:consphere} as $x\in T$ lies a distance $\alpha/\sqrt{2}$ from every vertex.} \label{fig:consphereextreme}
\end{figure}
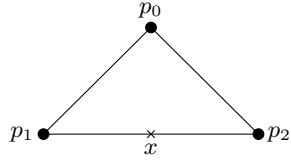

\prop{Let $\alpha\geq 0$ and $\epsilon \geq 0$. If $P=\{p_0, p_1, \dotsc, p_k\}\subset\mathbb{R}^n$ is a set of points such that
$||p_i-p_j|| \leq \epsilon\alpha$, then the following relation holds:
\begin{equation*}
  \Conv\left(\bigcup_{0\leq i \leq k}B(p_i; \alpha) \right) \subseteq \bigcup_{0\leq i \leq k}B\left(p_i; \alpha\sqrt{1+\frac{\epsilon^2}{2}}\right).
\end{equation*}
\label{prop:chull}
}
\begin{proof}
First, observe that we have the equality
\begin{equation*}
  \Conv\left(\bigcup_{0\leq i \leq k}B(p_i; \alpha) \right) = \left\{x\in\mathbb{R}^n \setsuchthat \exists y\in\Conv (P), ||x- y|| < \alpha\right\}.
\end{equation*}
Any point $x\in \Conv(P)$ is contained in the union $\cup_{0\leq i\leq k} B(p_i; \epsilon\alpha/\sqrt{2})$ by Lemma \ref{lem:consphere}. 
Thus, what remains to be shown is that the proposition holds true for any $x\in\mathbb{R}^n$ for which there is a $p\in \Conv\{p_{i_0}, \ldots, p_{i_k}\}$, $k\leq n-1$, such that $||x-p||<\alpha$.
The last inequality follows since $x$ is in the exterior
of the convex hull and the most nearby point cannot be strictly inside an $n$-simplex.  

Denote by $x^\prime$ the orthogonal projection of $x$ down on the affine space spanned by $\{p_{i_0}, \ldots, p_{i_k}\}$. If $x^\prime \in \Conv\{p_{i_0}, \ldots, p_{i_k}\}$ it follows from Lemma~\ref{lem:consphere} that there exists a $p_{i_j}$ such that 
\begin{equation*}
  ||p_{i_j} - x||^2 = ||p_{i_j} - x^\prime||^2 + ||x-x^\prime||^2 \leq \frac{\epsilon^2\alpha^2}{2} + \alpha^2 = \alpha^2\left(1+\frac{\epsilon^2}{2}\right).
\end{equation*}
If $x^\prime \not\in \Conv\{p_{i_0}, \dotsc, p_{i_k}\}$ it implies the existence of a point $p^\prime$ on the 
boundary of $\Conv\{p_{i_0}, \dotsc, p_{i_k}\}$ such that $||x-p^\prime|| \leq ||x-p|| < \alpha$ and we can repeat the process
for that point. This completes the proof as the proposition is trivially true if $k=0$. 

\end{proof}
\begin{figure}[htpb]
  \centering
  \begin{tikzpicture}[font=\small]
    \begin{scope}
      \coordinate (p0) at (0,0);
      \coordinate (p1) at (240:1cm);
      \coordinate (p2) at (300:1cm);
      \draw[black] \convexpath{p0,p2,p1}{0.52cm};
      \draw[thick, dashed, postaction={white, fill}]{
        (p0) circle (0.52)
        (p1) circle (0.52)
        (p2) circle (0.52)
      };
    \end{scope}
    \begin{scope}[xshift=5cm]
      \coordinate (p0) at (0,0);
      \coordinate (p1) at (240:1cm);
      \coordinate (p2) at (300:1cm);
      \draw[thick, dashed, postaction={white, fill}]{
        (p0) circle (0.8)
        (p1) circle (0.8)
        (p2) circle (0.8)
      };
      \draw[black] \convexpath{p0,p2,p1}{0.55cm};
    \end{scope}    
  \end{tikzpicture}
  \caption{Left: the convex hull of a union of three balls. Right: By increasing
  the radii of the balls their union eventually covers the convex hull.} \label{fig:chullinc}
\end{figure}
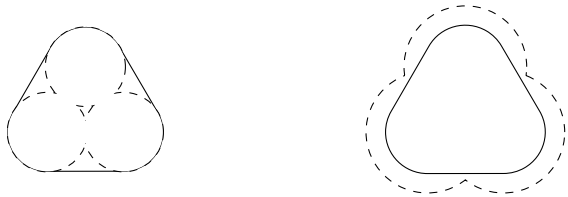
\noindent The previous proposition is illustrated in Figure \ref{fig:chullinc}.
 By combining Propositions \ref{prop:approx} and \ref{prop:chull} we 
have shown the following.
\prop{Let $\epsilon \geq 0$. Suppose ${\widetilde{\mathcal{U}}}(P)$ is a coarsening of $\mathcal{U}(P)$ 
with the property that for every $\alpha\geq 0$ and every pair of indices $i, j\in J\in J(I(\alpha))$, the inequality
$||p_i-p_j||\leq \alpha\cdot \epsilon$ holds. Then
$H_p(\widetilde{\mathcal{C}}(P), {\rm id}_*)$ is a $\sqrt{1+\epsilon^2/2}$-approximation
 of the \Cech{} persistence module built on $P$. }

The previous proposition allows us to build good approximations to the 
\Cech{} persistence module with far fewer simplices. A problem with this approach 
is that such a memory efficient construction comes at the expense of computing 
weights of simplices. As an example, if $J(I(\alpha))$ consists of $k$ partitions,
each with $m$ elements, then computing the smallest $\alpha$ at which they have a $k$-intersection has time complexity $\mathcal{O}(m^k)$. To mend this we seek methods to approximate
this persistence module by ones that are less computationally expensive. The next section details one method for doing so. 
\label{sec:approx}
\subsubsection{Choosing a representative}
Let $(\widetilde{\mathcal{U}}(P), {\rm id})$ be a coarsening of the \Cech{} sequence of covers
and for every $\alpha\geq 0$ and every $J\in J(\alpha)$ choose a representative $p_j\in P$, where $j\in J$.
Denote the set of representatives at scale $\alpha$ by $P_\alpha$. For every
$\alpha\geq 0$ we define the subcomplex $\mathcal{C}^{\rm rep}(P; \alpha)\subseteq \widetilde{\mathcal{C}}(P; \alpha)$
to be the smallest simplicial complex such that:
\begin{enumerate}
\item $\mathcal{C}(P_\alpha; \alpha)\subseteq \mathcal{C}^{\rm rep}(P; \alpha)$
\item ${\rm id}^{\alpha, \alpha^\prime}: \widetilde{\mathcal{C}}(P;\alpha)\to\widetilde{\mathcal{C}}(P;\alpha^\prime)$ restricts to a simplicial map $\mathcal{C}^{\rm rep}(P; \alpha) \to \mathcal{C}^{\rm rep}(P; \alpha^\prime)$
\end{enumerate}

The idea is to choose a set
of representatives, one for each element $J(I(\alpha))$, and use those representatives to approximate
the persistent homology computation. However, to get a well-defined sequence of simplicial complexes
and simplicial maps, we need to make sure that the image of a simplex spanned by one set of representatives
 is a simplex at a later filtration time, where the set of representatives may be different. Thus, our approximate
 complex contains the simplicial complex built on the set of representatives and, in addition, the images of simplices spanned
 by representatives at earlier filtration times. 

\prop{The persistence modules $(H_p(\widetilde{\mathcal{C}}(P)), \id_*)$ and
$(H_p(\mathcal{C}^{\rm rep}(P)), \id_*)$ are $\frac{1}{1-\epsilon}$-approximate.}
\begin{proof}
The simplicial complexes $\mathcal{C}^{\rm rep}(P_\alpha; \alpha)$ and
$\widetilde{\mathcal{C}}(P; \alpha)$ are defined over the same indexing set $J(I(\alpha))$
for every $\alpha\geq 0$. This follows from having chosen one representative for each
covering set of $\widetilde{\mathcal{U}}(\alpha)$. Now choose 
$x\in U_J\in \widetilde{\mathcal{U}}(\alpha)$,
where $||x-p_j||<\alpha$ for some $j\in J$, and let $p$ be the representative of
$\id^{\alpha,\alpha/(1-\epsilon)}(J)\in J(I(\alpha/(1-\epsilon)))$. Then 
\begin{equation*}
  ||p-x|| \leq ||p-p_j|| + ||x-p_j|| < \frac{\alpha\epsilon}{1-\epsilon} + \alpha = \frac{\alpha}{1-\epsilon}.
\end{equation*}
Hence, we have a map of covers $\widetilde{\mathcal{U}}(P;\alpha)\to \mathcal{U}(P_{\alpha/(1-\epsilon)}; \frac{\alpha}{1-\epsilon})$
which induces the first map of the composition 
\begin{equation*}
\widetilde{\mathcal{C}}(P; \alpha) \to \mathcal{C}\left(P_{\alpha/(1-\epsilon)}; \frac{\alpha}{1-\epsilon}\right)\subseteq \mathcal{C}^{\rm rep}\left(P_{\alpha/(1-\epsilon)}; \frac{\alpha}{1-\epsilon}\right)
\subseteq \widetilde{\mathcal{C}}\left(P; \frac{\alpha}{1-\epsilon}\right)
\end{equation*} The proof 
follows from application of Definition \ref{def:capprox} and Theorem \ref{teo:bottdistance}.
\end{proof}

\subsection{Relationship to graph induced complexes}
We conclude this section by briefly discussing a related construction introduced in~\cite{dey:gic} by Dey \textit{et al.}
\df{Let $G(V)$ be a graph with vertex set $V$ and let $\nu: V\to V^\prime$ be a vertex
map where $\nu(V) = V^\prime \subseteq V$. The \idf{graph induced complex} $\mathcal{G}(V, V^\prime, \nu)$ 
is defined as the simplicial complex where a $k$-simplex $[v_1^\prime, \ldots , v_{k+1}^\prime]$ is
in $\mathcal{G}(V, V^\prime, \nu)$ if and only if there exists a $(k+1)$-clique 
$\{v_1, \ldots, v_{k+1}\}\subseteq V$ such that $\nu(v_i) = v_i^\prime$ for each
$i\in\{1, \ldots, k+1\}$.}

First we note that a coarsening of a cover as defined at the beginning of Section~\ref{sec:nongood} induces a graph induced
complex. Indeed, just choose a representative for each partition and let $\nu$ be the map
which takes a vertex to its representative. This, together with a net-tree construction
as in Section \ref{sec:nettree}, is utilized in \cite{dey:2012:simplicialmaps} to construct a linear-size
approximation to the Vietoris--Rips persistence module. Constructing the analogue \Cech{} approximation is straightforward 
and it can be shown that it enjoys error bounds similar to what we proved in Section~\ref{sec:nettree}.
In fact, the analogue \Cech{} construction is nothing more than forming a coarsening of the \Cech{} sequence of covers 
where the process of partitioning covering sets is determined by a net-tree. 
Unfortunately, as discussed at the end of Section~\ref{sec:approx},
computing the $k$-intersections needed for this construction is very time consuming. 

\section{Computational experiments} \label{sec:experiments} 
This section details our implementation of the approximation schemes
described above, as well as some computational examples examining their
efficacy and practical applicability.

\subsection{Implementation}
We realize an implementation of the approximation schemes detailed in
Section~\ref{sec:nongood} as a C++ program in the following way.

The program takes as parameters $\epsilon\geq 0$ (as in
\secref{explicitapprox}), a maximal scale $\alphamax>0$ (as usual when
computing persistence), a maximal simplex dimension $D>0$ (as usual)
and $L\in\mathbb{N}$ (to be explained later). Given an input point
cloud $P=\{p_1,\dotsc,p_N\}\subseteq\mathbb{R}^d$, we first use
Müllner's \textit{fastcluster}~\cite{fastcluster} to compute its
hierarchical clustering $\HC(P)$ with the \emph{complete} linkage
criterion. This is considered a preprocessing step.

A \idf{cluster} is a pair $(p, X)$ with $p\in X\subseteq P$,
wherein $p$ will be called the cluster's
\idf{representative} and $X$ its \idf{members}. At initialization
time, we begin with $N$ clusters
\begin{equation*}
  c^0_1=(p_1,\{p_1\}), c^0_2=(p_2,\{p_2\}),\dotsc,c^0_N=(p_N, \{p_N\}).
\end{equation*}
and denote their enumeration by $C^0=\{1,\dotsc,N\}$.

We shall regard $\HC(P)$ as the data of a series of \idf{linkage
  events} of the form $(s, i,
j)\in\mathbb{R}\times\mathbb{N}\times\mathbb{N}$ ordered by the first
component, and (arbitrarily) with the convention that $i<j$. An event
like this signifies the linking of clusters $c^l_i=(p^l_i, X^l_i)$ and
$c^l_j=(p^l_j, X^l_j)$ at scale $s$, from which we form a new cluster
$c^{l+1}_i=(p^{l+1}_i, X^l_i\cup X^l_j)$ where $p^{l+1}_i\in X^l_i\cup
X^l_j$; in principle the new representative $p^{l+1}_i$ can be chosen
arbitrarily from $X^l_i\cup X^l_j$, but for heuristic reasons we pick
the point in the member set $X^l_i\cup X^l_j$ closest to that set's
centroid.

We maintain a priority queue $Q$ of simplices prioritized by their
persistence time. At initialization, the queue contains the
$0$-simplices $[1],\dotsc,[N]$ all at persistence time $0$. A simplex
tree, along with associated annotations and other data structures as
described in \secref{computing}, are also initialized empty. These
data structures that track homology will jointly be referred to as
$\PH$ below, and we shall abuse language and speak of a simplex as
``belonging to $\PH$'' when the simplex is present in the simplicial
complex. We also initialize $\alpha'=0$ and $l=0$ to begin with.

The implementation code then proceeds in the following steps:
\begin{enumerate}
\item If $Q$ is empty, we are done and go to step~\ref{1c1792}. If
  not, pop a simplex $\sigma$ and its persistence scale $\alpha$ from
  the front, and continue. \label{96ad1e}
\item If $\alpha > \alphamax$, we are done and go to
  step~\ref{1c1792}. Otherwise continue.
\item If $\sigma$ is not already in $\PH$, add it according to
  \secref{computing}. In both cases, continue.
\item If $\dim\sigma>D$, go to
  step~\ref{6a2693}. Otherwise, for each simplex
  $\tau\in\{\sigma\cup\{i\} \setsuchthat i\in C^l\}$:
  compute\footnote{Our implementation uses Gärtner's
    \textit{Miniball}~\cite{miniball} for this computation.} the
  radius $r_\tau$ of the smallest enclosing ball of the set
  $\{p^l_i\setsuchthat i\in\tau\}\subseteq P$, and add $\tau$ to $Q$
  at persistence scale $r_\tau$. Go to step~\ref{6a2693}.
\item If at least $L$ simplices have been added to $\PH$ since the
  last time this step was reached, we (possibly) perform a
  simplification by going to step~\ref{455d4d}. Otherwise go to
  step~\ref{96ad1e}. \label{6a2693}
  \begin{enumerate}
  \item For each linkage event $(s,i,j)\in\HC(P)$ for which
    $s\in[\alpha',\epsilon\alpha)$, perform the edge contraction
    $[i,j]\mapsto [i]$ according to \secref{computing}, taking care to
    adjust persistence times to reflect a (possible) series of
    inclusions to satisfy the link condition. If there were no linkage
    events in the given interval, go to step~\ref{96ad1e}. Otherwise,
    denote the clusters present after handling the linkage event, as
    explained earlier in this section, by
    \begin{equation*}
      \{C^{l+1}_{i_1},\dotsc,C^{l+1}_{i_{N_{l+1}}}\} \subseteq \{C^{l}_{j_1},\dotsc,C^{l}_{j_{N_{l}}} \}
    \end{equation*}
    and go to  step~\ref{ca688f}. \label{455d4d}
  \item Clear $Q$ and reset it to contain the $0$-simplices
    $[i_1],\dotsc,[i_{N_l}]$, all at persistence scale $0$. Update $l$
    to $l+1$ and $\alpha'$ to $\epsilon\alpha$, and go to
    step~\ref{96ad1e}. \label{ca688f}
  \end{enumerate}
\item We are done. Any persistent homology generators not yet killed
  off are recorded as on the form $(b,\infty)$. \label{1c1792}
\end{enumerate}

The algorithm above may be summarized as follows: Compute \Cech{}
persistence until the underlying simplicial complex has at least $L$
simplices. When that is the case, walk up the complete linkage
dendrogram of the point cloud until scale $\epsilon\alpha$ is reached,
where $\alpha$ is the persistence scale. Any linkage event encountered
corresponds to an edge contraction, which is performed. After that,
computation of \Cech{} persistence resumes as before, albeit on a
reduced and changed point cloud, and collapses may happen again when
$L$ more simplices have been added. We terminate upon reaching
$\alphamax$, and ignore simplices of dimension above $D$ (thus
computing homology in dimensions $0,\dotsc,D-1$).

Note that $L$ is merely a parameter to reduce computational overhead
involved in the collapses, as a higher value postpones contractions
until the simplicial complex is denser. In principle, $L$ can be
thought of as zero. Also observe that $\epsilon=0$ corresponds to
computing ordinary \Cech{} persistence.

\subsection{Experiments}
This section describes three experiments designed to test the
feasibility of our implementation.

A calculation ranging from scale $0$ to scale $\alphamax$ will have
its resulting persistence diagram drawn as the region above the
diagonal in $[0,\alphamax]^2$. Generators still alive at $\alphamax$
will be referred to as on the form $(b,\infty)$ and plotted as
triangles, while generators of the form $(b,d)$ with $d\leq\alphamax$
will be plotted as dots. See \figref{hawaii:pd} for an example of
drawing conventions.

\subsubsection{Wedge of six circles enclosing each other} \label{sec:hawaii}

We produced a point cloud by randomly (uniformly) sampling $100$
points from a circle of radius $1$ centered at $(0,1)$, $200$ points
from a circle of radius $2$ centered at $(0,2)$ and so forth up to
$600$ points from a circle of radius $6$ centered at $(0,6)$. Each
point in the circle of radius $r$ was perturbed by radial noise
sampled from the uniform distribution on $[(1-0.02)r, (1+0.02)r)$. The
very dense region near the origin where all the circles meet (see
\figref{hawaii:cloud}) contributes nothing to homology, but
significantly adds to the number of simplices if no collapse is done.

Running to $\alphamax=2$, our implementation clearly limited the
number of simplices --- see \figref{hawaii:simplexcount} and note
especially the rapid increases between collapses, the regimes where
the ordinary \Cech{} filtration is formed --- while producing a highly
correct persistence diagram, as is shown in \figref{hawaii:pd}.

\begin{figure}[htpb]
  \centering
  \includegraphics[]{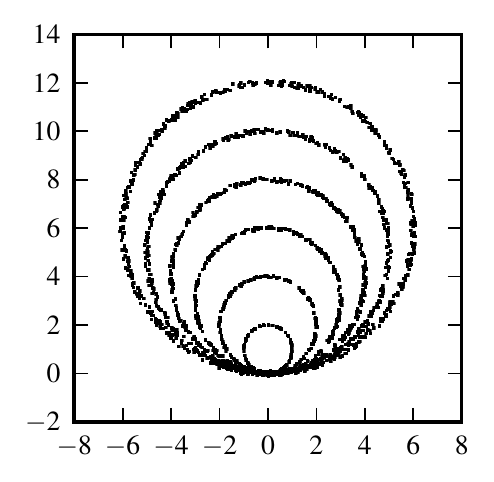}
  \caption{The point cloud from the example in \secref{hawaii}.} \label{fig:hawaii:cloud}
\end{figure}

\begin{figure}[htpb]
  \centering
  \includegraphics[]{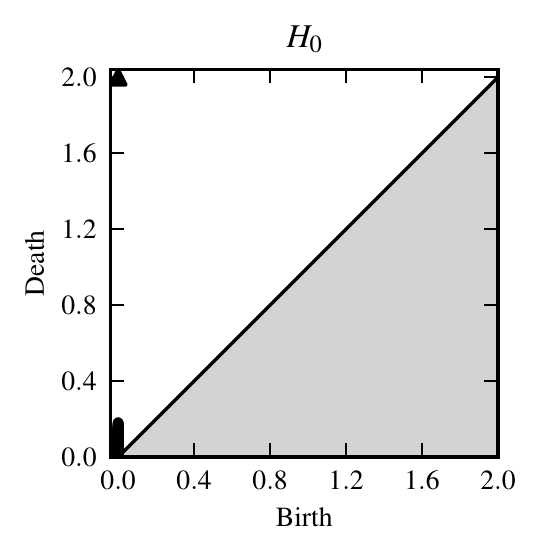}
  \includegraphics[]{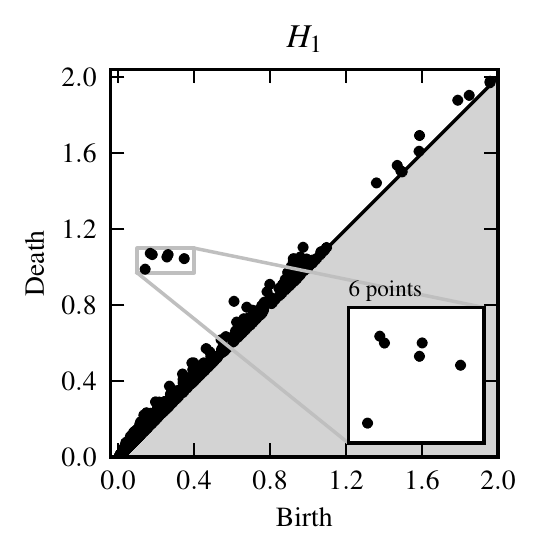}
  \caption{Persistence diagrams of the (noisy) wedge of six circles in
    \secref{hawaii} with $\epsilon=3/4$ and
    $\alphamax=2$.} \label{fig:hawaii:pd}
\end{figure}

\begin{figure}[htpb]
  \centering
  \includegraphics[]{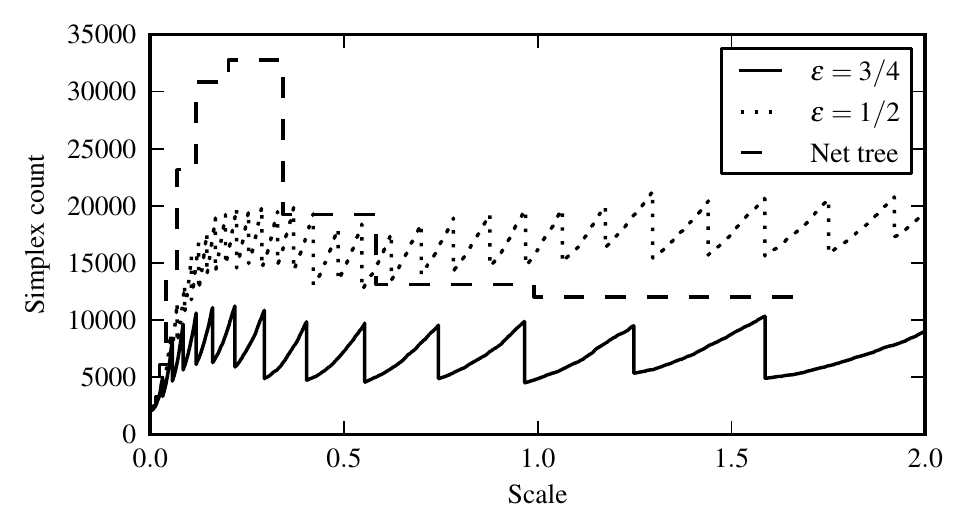}
  \caption{The simplex count while computing persistence for the
    example in \secref{hawaii}. The net tree computations were run with $\alpha_0=10^{-3}$ and $\epsilon=0.7$ in the notation of \secref{linsize}.} \label{fig:hawaii:simplexcount}
\end{figure}

\subsubsection{The real projective plane} \label{sec:rp2}
We sampled $\mathbb{R}P^2$ by randomly selecting $5000$ points on
$\mathbb{S}^2$ and embedding them in $\mathbb{R}^4$ under $(x,y,z)
\mapsto (xy, xz, y^2 -z^2, 2yz)$ as a test of how well our scheme
handles higher dimensions. \figref{rp2:pd} shows that the expected
persistence diagram resulted when computing to $\alphamax=0.54$ at
$\epsilon=1.0$.

\figref{rp2:simplexcount} compares our scheme (at $\epsilon=1$) with
the very beginning an ordinary \Cech{} filtration. Our implementation
keeps the number of simplices manageable, peaking at just above
$3\cdot10^5$ simplices near the end (scale $0.54$), while still
recovering the correct persistence diagram. The figure also shows the
simplex count for the net tree construction; notice that we were
unable to correctly choose $\alpha_0$ and $\epsilon$ so as to make
computations with it feasible, unlike for the example in
\secref{hawaii}.

\begin{figure}[htpb]
  \centering
  \includegraphics[]{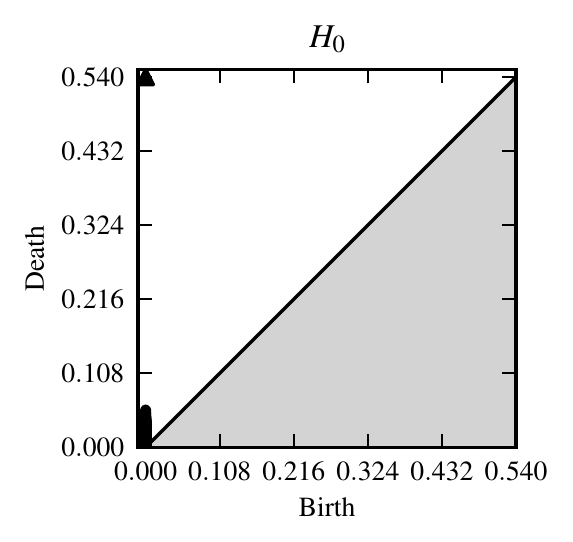}
  \includegraphics[]{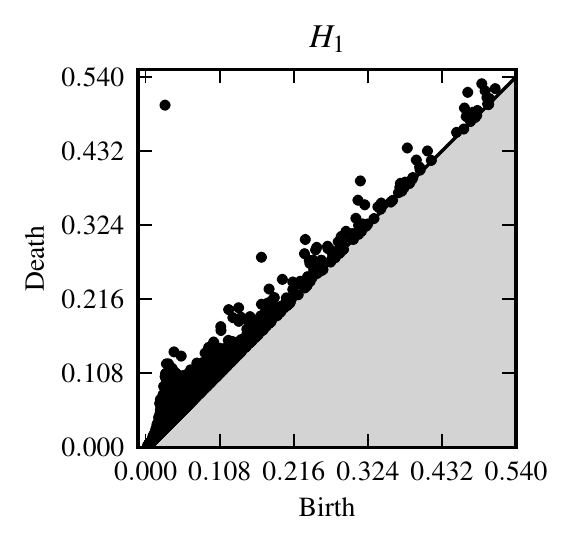}
  \includegraphics[]{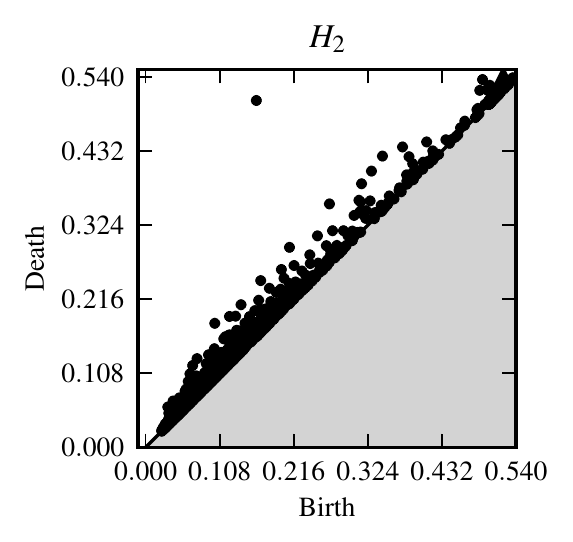}
  \includegraphics[]{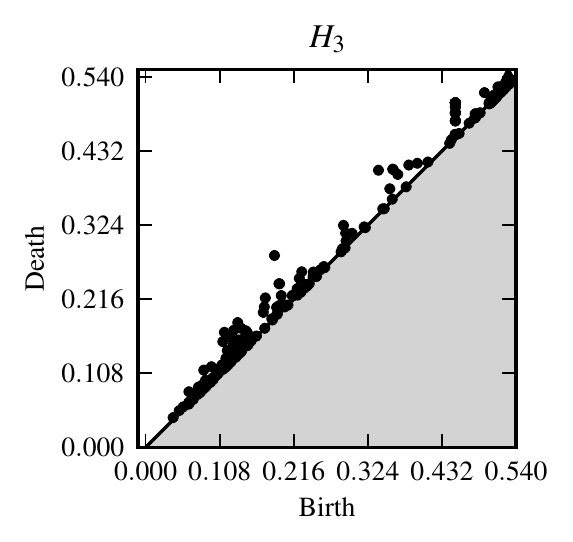}
  \caption{Persistence diagrams for the $5000$ point random sample of
    $\mathbb{R}P^2$ embedded in $\mathbb{R}^4$ as described in
    \secref{rp2}, with $\epsilon=1.0$ and
    $\alphamax=0.54$.} \label{fig:rp2:pd}
\end{figure}
\begin{figure}[htpb]
  \centering
  \includegraphics[]{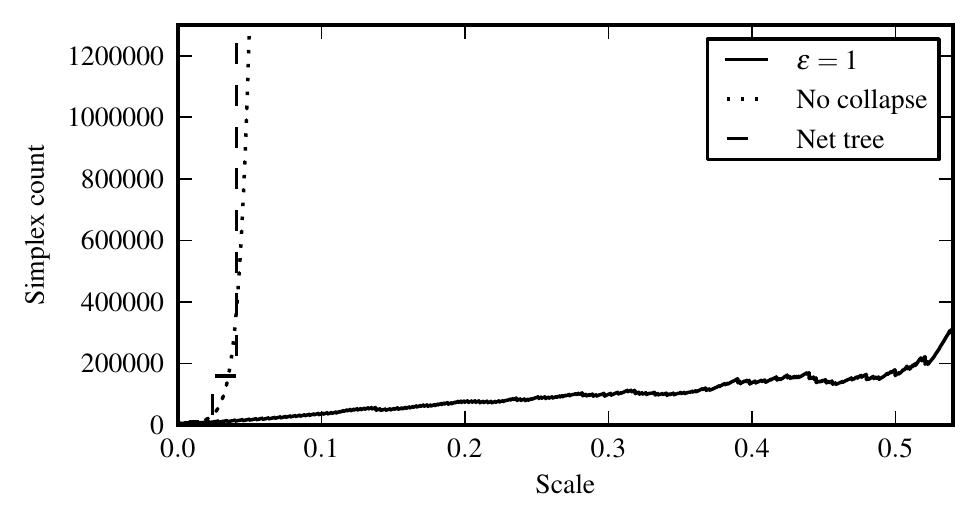}
  \caption{The simplex count for the $\mathbb{R}P^2$ example from
    \secref{rp2} compared to that of an ordinary
    \Cech{} filtration and the net tree approach (with $\alpha_0=10^{-3}$ and $\epsilon=0.7$ in the notation of \secref{linsize}).} \label{fig:rp2:simplexcount}
\end{figure}

\subsubsection{Time-delay embedding} \label{sec:timedelay} 

We solved the Lorenz system (with parameters $\sigma=10$, $r=28$,
$b=8/3$ in the notation of~\cite{lorenzatt}) and created a time series
$y\in\mathbb{R}^{15000}$ by adding together all three of the
solution's coordinates at each of $15000$ points in time. Let $A(i)$
denote the (discrete) correlation of $y$ and $y$ shifted $i$ places to
the right. The first local minimum of $A$ occurs at $15$, so that was
used as delay to embed $y$ in $\mathbb{R}^3$ by delay-embedding. The
resulting point cloud, with $15000-(3-1)\cdot 15 = 14970$ points,
reconstructs~\cite{takens1981} the Lorenz attractor as seen in
\figref{lorenz:cloud}. Observe that there are regions that have a very
high density of points.

Our implementation computes the expected persistence diagram
(\figref{lorenz:pd}) while keeping the number of simplices low
(\figref{lorenz:simplexcount}).

\begin{figure}[htpb]
  \centering
  \includegraphics[]{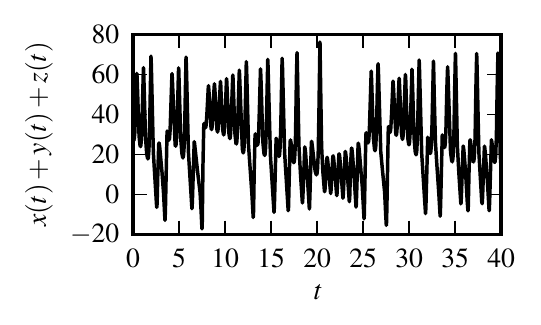}
  \includegraphics[]{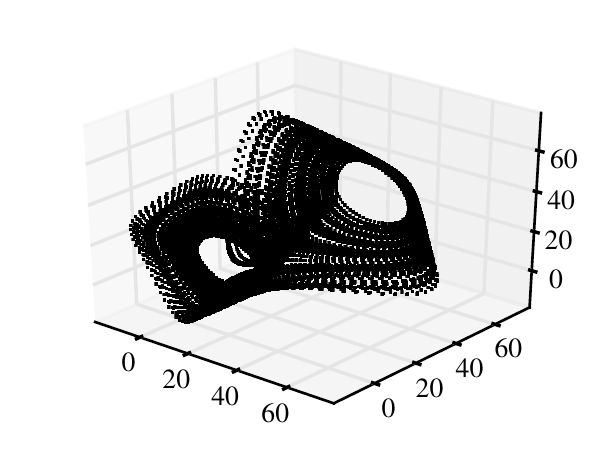}
  \caption{Lorenz system scalar measurements (parts shown on the left)
    and delay-embedding reconstructed attractor (right), as detailed
    in \secref{timedelay}.} \label{fig:lorenz:cloud}
\end{figure}
\begin{figure}[htpb]
  \centering
  \includegraphics[]{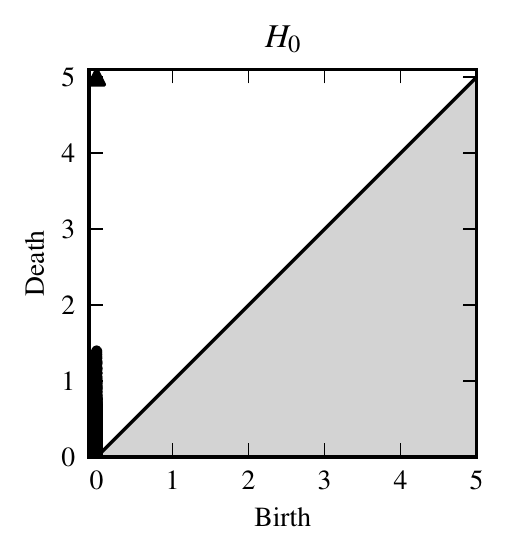}
  \includegraphics[]{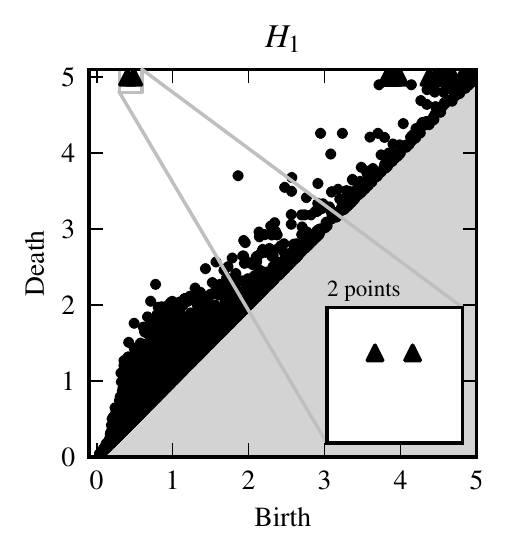}
  \caption{Persistence diagrams for the Lorenz attractor described in \secref{timedelay}.} \label{fig:lorenz:pd}
\end{figure}
\begin{figure}[htpb]
  \centering
  \includegraphics[]{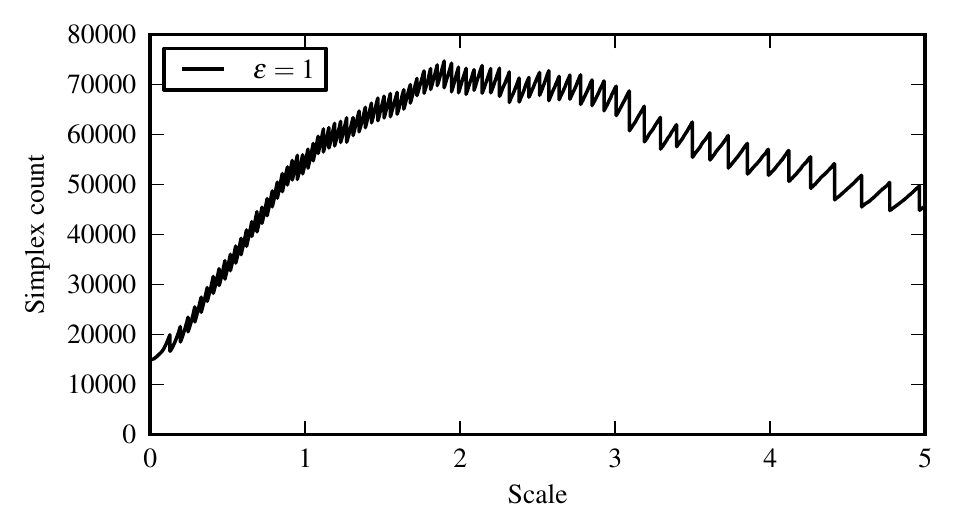}
  \caption{Simplex count for the Lorenz attractor computations described in \secref{timedelay}.} \label{fig:lorenz:simplexcount}
\end{figure}

\section{Conclusions and future work}
We have presented two approximation schemes for the \Cech{} filtration in Euclidean space. 
One construction uses a net-tree to build the \Cech{} complex at fewer and 
fewer simplices as we increase the scale parameter. The other approach forms a coarsening
of the \Cech{} filtration by using covering sets formed by unions of open balls. Computing $k$-intersections of
such covering sets is computationally expensive, so we approximated the persistence
module by choosing a representative at each scale. In practice we experienced far
better results with this method than the net-tree approach. This contrasts with the superior theoretical guarantees enjoyed by the net-tree construction. By approximating the \Cech{} filtration
through representatives we lose much of the theoretical guarantees, but the
frequent collapses allow for much greater maximum scales.

We believe that an interesting direction for future work is to find
other approximations than choosing a representative for each covering
set. This could be done either by choosing multiple representative
points, or by using the embedding to approximate the covering sets by
sets for which computing $k$-intersections is tractable.

The proofs in this paper also rely heavily on the notion of good covers. In general metric
spaces a cover by a union of balls may fail to be good, and the Nerve lemma is lost. It would
be interesting to see if there are similar results without this precondition. We believe
it should be so, as the net-tree construction for the Vietoris--Rips filtration 
extends to general metric spaces. 

\printbibliography

\end{document}